\crefname{assumption}{Assumption}{Assumptions}
\crefname{figure}{Figure}{Figures}
\theoremstyle{plain}
\newtheorem{theorem}{Theorem}[section]
\newtheorem{lemma}[theorem]{Lemma}
\newtheorem{proposition}[theorem]{Proposition}
\numberwithin{equation}{section}
\theoremstyle{definition}
\theoremstyle{remark}
\newtheorem{remark}[theorem]{Remark}
\newtheorem{assumption}[theorem]{Assumption}
\setlist[itemize]{leftmargin=.5in}
\setlist[enumerate]{leftmargin=.5in,topsep=3pt,itemsep=3pt,label=(\roman*)}
\newcommand{\email}[1]{\href{#1}{#1}}
\newcommand{\TheTitle}{A Fourier-based inference method for learning interaction kernels in particle systems}
\newcommand{\TheAuthors}{}
\title{\TheTitle}
\author{Grigorios A. Pavliotis \thanks{Department of Mathematics, Imperial College London, London SW7 2AZ, UK, \email{g.pavliotis@imperial.ac.uk}.}
\and Andrea Zanoni \thanks{Centro di Ricerca Matematica Ennio De Giorgi, Scuola Normale Superiore, Pisa, Italy, \email{andrea.zanoni@sns.it}.}}
\date{}
\newcommand{\abs}[1]{\left\lvert#1\right\rvert}
\newcommand{\norm}[1]{\left\|#1\right\|}
\renewcommand{\Pr}{\mathbb{P}}
\newcommand{\N}{\mathbb{N}}
\newcommand{\R}{\mathbb{R}}
\newcommand{\epl}{\varepsilon}
\newcommand{\defeq}{\coloneqq}
\newcommand{\eqdef}{\eqqcolon}
\newcommand{\E}{\operatorname{\mathbb{E}}}
\DeclareMathOperator*{\argmin}{arg\,min}
\newcommand{\compl}{\mathsf{C}}
\renewcommand{\d}{\mathrm{d}}
\newcommand{\dd}{\,\mathrm{d}}
\definecolor{shade}{RGB}{100, 100, 100}
\definecolor{bordeaux}{RGB}{128, 0, 50}
\definecolor{leg1}{RGB}{0,114,189}
\definecolor{leg2}{RGB}{217,83,25}
\definecolor{leg3}{RGB}{237,177,32}
\definecolor{leg4}{RGB}{126,47,142}
\definecolor{leg5}{RGB}{119,172,48}
\definecolor{leg21}{RGB}{62,38,169}
\definecolor{leg22}{RGB}{46,135,247}
\definecolor{leg23}{RGB}{55,200,151}
\definecolor{leg24}{RGB}{254,195,56}
\begin{document}
	
\maketitle	

\begin{abstract}
We consider the problem of inferring the interaction kernel of stochastic interacting particle systems from observations of a single particle. We adopt a semi-parametric approach and represent the interaction kernel in terms of a generalized Fourier series. The basis functions in this expansion are tailored to the problem at hand and are chosen to be orthogonal polynomials with respect to the invariant measure of the mean-field dynamics. The generalized Fourier coefficients are obtained as the solution of an appropriate linear system whose coefficients depend on the moments of the invariant measure, and which are approximated from the particle trajectory that we observe. We quantify the approximation error in the Lebesgue space weighted by the invariant measure and study the asymptotic properties of the estimator in the joint limit as the observation interval and the number of particles tend to infinity, i.e. the joint large time-mean field limit. We also explore the regime where an increasing number of generalized Fourier coefficients is needed to represent the interaction kernel. Our theoretical results are supported by extensive numerical simulations.
\end{abstract}

\textbf{AMS subject classifications.} 35Q70, 35Q84, 42C10, 60J60, 62M20.

\textbf{Key words.} Interacting particle systems, statistical inference, interaction kernel, invariant measure, mean-field limit, McKean-Vlasov PDE, orthogonal polynomials.

\section{Introduction}

Stochastic interacting particle systems find applications in many areas related to social sciences \cite{GPY17}, collective behavior \cite{NPT10}, pedestrian dynamics~\cite{GSW19}, physics \cite{Gol16}, biology \cite{Suz05}, and machine learning \cite{SiS20}. Recently, tremendous progress has been made in the qualitative and quantitative understanding of such systems, from different perspectives: modeling, analysis, quantitative propagation of chaos~\cite{LaL23a,DGP23}, optimal control~\cite{BKP25}, as well as the development of numerical methods for solving the nonlinear, nonlocal SDEs and PDEs that we obtain in the mean-field limit~\cite{GPV20,CdR24,LTZ25}. We refer to~\cite{CLD22a,CLD22b} for a recent review and references to the literature. In addition, several important contributions have been made to the development of efficient inference methodologies for interacting particle systems and their mean-field limit. Statistical inference, learning, data assimilation, and inverse problems for McKean SDEs and McKean--Vlasov PDEs are topics of great current interest. In particular, the wealth of data in real-world problems pushes towards data-driven models, and therefore learning parameters or functions in mathematical models from observed data is fundamental. A partial list of recent activities on inference for mean-field SDEs and PDEs includes kernel methods~\cite{LMT21,LaL22}, constrast functions based on a pseudo-likelihood \cite{AHP23}, maximum likelihood estimation \cite{LMT22,DeH23}, stochastic gradient descent \cite{PRZ25,SKP23}, approximate likelihood based on an empirical approximation of the invariant measure \cite{GeL24}, method of moments \cite{PaZ24}, eigenfunction martingale estimating functions~\cite{PaZ22,PaZ25}, and regularized variational approaches \cite{CEM25}. An important observation upon which our previous work on inference for McKean SDEs is based is that, under the assumption of uniform propagation of chaos, the nonlinear mean-field SDE can be replaced by a linear (in the sense of Mckean) SDE, obtained by calculating the convolution term in the drift with respect to the (unique) invariant measure of the process. A detailed analysis of this approach can be found in~\cite{PaZ25}. Furthermore, a fully nonparametric estimation of the interaction potential has been studied using different methodologies based on deconvolution \cite{ABP24}, data-driven kernel estimators~\cite{DeH22}, the method of moments~\cite{CGL24}, least squares \cite{LaL22}, and projection techniques \cite{CoG23}. Nonparametric inference for the McKean--Vlasov PDE can also be formulated as an inverse PDE problem and a Bayesian approach can be applied~\cite{NPR25}. We also mention the work \cite{LZT19}, where a nonparametric least squares estimator that does not suffer from the curse of dimensionality is proposed to learn interaction kernels in deterministic dynamical systems.

The goal of this paper is to extend the parametric inference methodologies that were developed recently in~\cite{PaZ24,PaZ22} to a semiparametric setting, combining the method of moments~\cite{PaZ24,CGL24} with a spectral-theoretic approach~\cite{GHR04,PaZ22,Nic24,GiW25} based on generalized Fourier expansions. The main idea behind our approach is to expand the interaction kernel into an appropriate orthonormal basis that is tailored to the problem at hand. 

We will consider a system of $N$ interacting particles moving in one dimension
\begin{equation} \label{eq:interacting_particles} 
\begin{aligned}
\d X_t^{(n)} &= - V'(X_t^{(n)}) \dd t - \frac1N \sum_{i=1}^N W'(X_t^{(n)} - X_t^{(i)}) \dd t + \sqrt{2\sigma} \dd B_t^{(n)}, \\
X_0^{(n)} &\sim \nu, \qquad n = 1, \dots, N,
\end{aligned}
\end{equation}
where $t \in [0,T]$, $V,W \colon \R \to \R$ are the confining and interaction potentials, respectively, and $\sigma > 0$ is the diffusion coefficient. Moreover, $\{B_t^{(n)}\}_{n=1}^N$ are standard independent one-dimensional Brownian motions. We note that the method developed in this paper can be, in principle, extended to the multidimensional case, as we discuss in \cref{sec:conclusion}. We assume chaotic initial conditions with distribution $\nu$, which is, of course, independent of the Brownian motions $\{B_t^{(n)}\}_{n=1}^N$. Then, in the limit as $N \to \infty$, and under appropriate assumptions on the confining and interaction potentials, each particle converges in law to the solution of the mean-field McKean SDE
\begin{equation} \label{eq:mean_field}
\begin{aligned}
\d X_t &= - V'(X_t) \dd t - (W' * u(t, \cdot))(X_t) \dd t + \sqrt{2\sigma} \dd B_t, \\
X_0 &\sim \nu,
\end{aligned}
\end{equation}
where $u(t, \cdot)$ is the density of the law of the process $X_t$ with respect to the Lebesgue measure, and solves the McKean--Vlasov PDE
\begin{equation} \label{eq:FP}
\begin{aligned}
\frac{\partial u}{\partial t}(t,x) &= \frac{\partial }{\partial x} \left( (V'(x) + (W' * u(t,\cdot))(x)) u(t,x) \right) + \sigma \frac{\partial^2 u}{\partial x^2}(t,x), \\
u(x,0) \dd x &= \nu(\d x).
\end{aligned}
\end{equation}
Moreover, the density $\rho$ of the invariant measure satisfies the stationary McKean--Vlasov equation
\begin{equation} \label{eq:McKean_Vlasov}
\frac{\d}{\d x} \Big( V'(x) + (W' * \rho)(x)) \rho \Big) + \sigma \frac{\d^2 \rho}{\d x^2} = 0,
\end{equation}
or, equivalently, the self-consistency equation
\begin{equation} \label{eq:invariant_measure}
\rho(x) = \frac1Z e^{-\frac1\sigma \left( V(x) + (W * \rho)(x) \right)} \quad \text{with} \quad Z = \int_\R e^{-\frac1\sigma \left( V(x) + (W * \rho)(x) \right)} \dd x.
\end{equation}
A diagram illustrating the relationships between the equations introduced so far is shown in \cref{fig:flowchart}. As is well known, without convexity assumptions on interaction and/or confining potentials, the McKean SDE \eqref{eq:mean_field} can have multiple stationary states~\cite{Daw83,Tug14}. However, in this work, we place ourselves in the (strict) convexity/uniform propagation of chaos setting~\cite{Mal01,LaL23a} that ensures the uniqueness of stationary states for the McKean-Vlasov dynamics. We also need the initial distribution to have bounded moments of any order. In particular, we make the following assumption.
\begin{assumption} \label{as:unique}
The process $X_t$ that solves the McKean SDE \eqref{eq:mean_field} is ergodic and admits a unique invariant measure with density $\rho$ that satisfies equations \eqref{eq:McKean_Vlasov} and \eqref{eq:invariant_measure}. Moreover, there exists a constant $\widetilde C > 0$, independent of $T$ and $N$, such that for all $t \in [0,T]$, $n = 1, \dots, N$, and $p \ge 1$
\begin{equation}
\left( \E \left[ (X_t^{(n)} - X_t)^2 \right] \right)^{1/2} \le \frac{\widetilde C}{\sqrt{N}}, \qquad \left( \E \left[ \abs{X_t^{(n)}}^p \right] \right)^{1/p} \le \widetilde C \quad \text{and} \quad \left( \E \left[ \abs{X_t}^p \right] \right)^{1/p} \le C,
\end{equation}
where $X_t$ is given by setting the Brownian motion $B_t = B_t^{(n)}$ in equation \eqref{eq:mean_field}.
\end{assumption}
\begin{remark}
\cref{as:unique} is satisfied, for example, in the setting of \cite{Mal01}, where the confining and interaction potentials are strictly convex and convex, respectively. However, we believe that uniform propagation of chaos is not needed for the inference methodology presented in this work and that the analysis can be extended to the setting considered in~\cite{MoR24}. This is confirmed in the numerical experiments that are presented in \cref{sec:numerics}. The analysis of our inference methodology in the presence of phase transitions will be presented elsewhere.
\end{remark}

\begin{figure}
\centering
\begin{tikzpicture}[scale=0.85]
\begin{small}
\draw[] (0,0) rectangle (3,3) node[pos=.5, align=center] {Interacting \\ particle \\ system \\ (equation \eqref{eq:interacting_particles}) \\ $X_t^{(n)}$};
\draw[->] (3,1.5) -- (4.5,1.5) node[midway, above] {$N \to \infty$};
\draw[] (4.5,0) rectangle (7.5,3) node[pos=.5, align=center] {McKean SDE \\ (equation \eqref{eq:mean_field}) \\ $X_t$};
\draw[->] (7.5,1.5) -- (9,1.5) node[midway, above] {$\mathcal L(X_t)$};
\draw[] (9,0) rectangle (12,3) node[pos=.5, align=center] {McKean--Vlasov \\ PDE \\ (equation \eqref{eq:FP}) \\ $u(t,x)$};
\draw[-] (12,1.5) -- (12.5,1.5); 
\draw[-] (12.5,1.5) -- (12.5,3.5);
\draw[->] (12.5,3.5) -- (14,3.5) node[midway, above] {$\frac{\partial u}{\partial t} = 0$}; 
\draw[] (14,2) rectangle (17,5) node[pos=.5, align=center] {Stationary \\ Fokker--Planck \\ (equation \eqref{eq:McKean_Vlasov}) \\ $\rho(x)$};
\draw[-] (12.5,1.5) -- (12.5,-0.5);
\draw[->] (12.5,-0.5) -- (14,-0.5) node[midway, above] {$t \to \infty$}; 
\draw[] (14,-2) rectangle (17,1) node[pos=.5, align=center] {Invariant \\ measure \\ (equation \eqref{eq:invariant_measure}) \\ $\rho(x)$};
\draw[<->] (15.5,1) -- (15.5,2);
\end{small}
\end{tikzpicture}
\caption{Schematic illustration summarizing the key mathematical objects and their relationships.}
\label{fig:flowchart}
\end{figure}

In this work, we consider the problem of learning the interaction kernel $W'$ (or equivalently the interaction potential $W$) by observing a \emph{single particle} from the interacting particle system \eqref{eq:interacting_particles}. The confining potential is assumed to be known. Intuitively, the approach that we present requires only a single trajectory because it is based on the stationary Fokker--Planck equation \eqref{eq:McKean_Vlasov}. Due to uniform propagation of chaos, all particles become indistinguishable as $N$ grows large. Moreover, the system converges exponentially fast to equilibrium, so for large $T$, the law of the particle is close to the unique invariant measure. Identifiability conditions for the interaction kernel, when there is no confining potential and the observation is of the mean-field PDE solution, have been studied in \cite{LaL23b}. However, it is important to note that it is not possible, in general, to identify both the confining and interaction potentials from a single-particle observation. This was already noted in our previous work~\cite{PaZ22,PaZ24}. In this paper, we make this remark more precise; see \cref{pro:identifiability}. We can consider the case where we either observe a continuous trajectory $(Y_t)_{t \in [0,T]}$ or discrete-time samples from it $\{ \widetilde Y_i \}_{i=0}^I$, where
\begin{equation} \label{eq:observations}
Y_t = X_t^{(\bar n)} \qquad \text{and} \qquad \widetilde Y_i = X_{\Delta i}^{(\bar n)},
\end{equation}
for a particle $\bar n$ in \eqref{eq:interacting_particles} and sampling rate $\Delta = T/I$ fixed (low-frequency regime). Clearly, because of the exchangeability of the interacting particle system, it does not matter which particle we are observing. Assuming now that $W' \in L^2(\rho)$ with $\rho$ the unique invariant measure of the mean-field system, we consider the truncated generalized Fourier series expansion
\begin{equation} \label{eq:W_fourier}
W'(x) \simeq \sum_{k=0}^K \beta_k \psi_k(x),
\end{equation}
where $\{ \psi_k \}_{k=0}^\infty$ are orthogonal polynomials with respect to the invariant measure $\rho$ that, in practice, will be approximated using the available observations. We then propose to estimate the coefficients $\{ \beta_k \}_{k=0}^K$ in the expansion by solving a linear system that is obtained by imposing appropriate constraints on the moments of $\rho$. These conditions, in turn, are derived from the stationary Fokker--Planck equation~\eqref{eq:McKean_Vlasov}. An outline of the main steps of this approach is provided in \cref{alg:procedure} in \cref{sec:method}. We note here the link between this part of the proposed statistical inference methodology and the problem of identifying all generators of reversible diffusions--or, equivalently, all Gibbs measures--whose eigenfunctions are orthogonal polynomials with respect to the invariant measure; see~\cite[Section 2.7]{BGL14} and, in particular,~\cite{BOZ21} for details. Our method builds upon and connects our two previous papers \cite{PaZ22,CGL24} on the application of the method of moments and of eigenfunction expansions to inference problems for interacting particle systems, respectively. In \cite{PaZ22} the main limitation is that the drift, interaction, and diffusion functions are assumed to be polynomials, and, therefore, the inference problem is fully parametric. In an interesting recent paper~\cite{CGL24}, this hypothesis is eliminated, and no assumptions are made about the functional form of potentials. However, for their methodology to work, multiple stationary particle trajectories, for example, 4, of the mean-field SDE must be observed. In addition, the basis functions in the generalized Fourier series expansion in \eqref{eq:W_fourier} have to be chosen appropriately a priori. The two main innovations of the work reported in this paper are: (1) we only need to observe a {\bf single} non-stationary particle trajectory of the interacting particle system and not of the mean-field SDE; and (2) we consider basis functions that are purpose-built and tailored to the stochastic interacting particle system. These basis functions are orthogonal polynomials with respect to the invariant measure of the mean-field dynamics.

In the following, we summarize the main contributions of this paper.

\begin{itemize}[leftmargin=*]
\item We extend our previous work on the application of the method of moments to inference problems for stochastic interacting particle systems. In particular, we introduce a semi-parametric methodology for learning the interaction kernel from the observation of a single-particle trajectory.
\item For the semi-parametric representation of the interaction kernel, we use orthogonal polynomials with respect to the invariant measure of the McKean SDE as basis functions for the Fourier series expansion. In particular, we construct purpose-built orthonormal basis functions that are tailored to the problem at hand. These basis functions are calculated numerically using the available particle trajectory observations.
\item We present a detailed convergence analysis of the proposed methodology, computing estimates of the approximation error as a function of the number of data, particles, and basis functions in the Fourier expansion. We provide thus theoretical guarantees for the convergence and accuracy of our method. 
\item We present several numerical experiments that highlight the effectiveness of our inference methodology.
\end{itemize}

\paragraph{Outline.} The rest of the paper is organized as follows. In \cref{sec:ortho} we consider orthogonal polynomials with respect to the invariant measure of the mean-field dynamics, which we approximate using the available observations from a single interacting particle, and in \cref{sec:method} we present a generalized method of moments to infer the interaction kernel. The convergence analysis in the limit of infinite data and particles for both orthogonal polynomials and the kernel estimator is provided in \cref{sec:analysis_ortho,sec:analysis_method}, respectively. Finally, we present different numerical experiments to test our method in \cref{sec:numerics}, and we suggest possible developments in \cref{sec:conclusion}.

\section{Orthonormal polynomials w.r.t. the invariant measure} \label{sec:ortho}

We consider the orthonormal basis consisting of orthogonal polynomials for the weighted space $L^2(\rho)$, where $\rho$ is the invariant measure of the mean-field dynamics. Starting from the set of monomials $\{ x^k \}_{k=0}^\infty$, the orthogonal polynomials $\{ \psi_k \}_{k=0}^\infty$ can be derived using the Gram--Schmidt orthonormalization procedure.
\begin{equation}
\psi_k(x) = \begin{cases}
1 & \text{if } k = 0, \\
\frac{x^k - \sum_{j=0}^{k-1} \psi_j(x) \int_\R y^k \psi_j(y) \rho(y) \dd y}{\norm{x^k - \sum_{j=0}^{k-1} \psi_j(x) \int_\R y^k \psi_j(y) \rho(y) \dd y}_{L^2(\rho)}} & \text{if } k \ge 1.
\end{cases}
\end{equation}
We mention that it is always possible to construct orthogonal polynomials with respect to a given probability measure on the real line, provided that the measure has finite moments of all orders. We refer to~\cite{Sze75} for more details. Notice that, since $\{ \psi_k \}_{k=0}^\infty$ are polynomials, the Gram--Schmidt procedure is only dependent on the moments $\{ \mathbb M^{(r)} \}_{r=0}^\infty$ of the measure $\rho$
\begin{equation}
\mathbb M^{(r)} = \E^\rho[X^r] = \int_\R x^r \rho(x) \dd x,
\end{equation} 
where the superscript $\rho$ denotes the fact that the expectation is computed with respect to the measure $\rho$. In particular, the $k$-th basis function is given by the determinant of a Hankel matrix with an additional row made of monomials 
\begin{equation} \label{eq:det_moments}
\psi_k(x) = \frac1{c_k} \det \left( \begin{bmatrix}
\mathbb M^{(0)} & \mathbb M^{(1)} & \mathbb M^{(2)} & \cdots & \mathbb M^{(k)} \\
\mathbb M^{(1)} & \mathbb M^{(2)} & \mathbb M^{(3)} & \cdots & \mathbb M^{(k+1)} \\
\vdots & \vdots & \vdots & \ddots & \vdots \\
\mathbb M^{(k-1)} & \mathbb M^{(k)} & \mathbb M^{(k+1)} & \cdots & \mathbb M^{(2k-1)} \\
1 & x & x^2 & \cdots & x^k
\end{bmatrix} \right) \eqdef \frac1{c_k} \det(\mathcal M_k),
\end{equation}
where $c_k$ is the normalization constant that ensures $\norm{\psi_k}_{L^2(\rho)} = 1$. Using the Laplace expansion for the determinants, we then write
\begin{equation} \label{eq:ortho_poly_def}
\psi_k(x) = \frac1{c_k} \sum_{j=0}^{k} \lambda_{kj} x^j,
\end{equation}
where
\begin{equation} \label{eq:lambda_def}
\begin{aligned}
\lambda_{kj} &= (-1)^{k+j} \det \left( \begin{bmatrix}
\mathbb M^{(0)} & \cdots & \mathbb M^{(j-1)} & \mathbb M^{(j+1)} & \cdots & \mathbb M^{(k)} \\
\mathbb M^{(1)} & \cdots & \mathbb M^{(j)} & \mathbb M^{(j+2)} & \cdots & \mathbb M^{(k+1)} \\
\vdots & \ddots & \vdots & \vdots & \ddots & \vdots \\
\mathbb M^{(k-1)} & \cdots & \mathbb M^{(k+j-2)} & \mathbb M^{(k+j)} & \cdots & \mathbb M^{(2k-1)}
\end{bmatrix} \right) \\
&\eqdef (-1)^{k+j} \det(\Lambda_{kj}).
\end{aligned}
\end{equation}
Therefore, the normalization constant satisfies
\begin{equation} \label{eq:c_def}
c_k^2 = \int_{\R^d} \left( \sum_{j=0}^{k} \lambda_{kj} x^j \right)^2 \rho(x) \dd x = \sum_{i=0}^k \sum_{j=0}^k \lambda_{ki} \lambda_{kj} \int_{\R^d} x^{i+j} \rho(x) \dd x = \sum_{i=0}^k \sum_{j=0}^k \lambda_{ki} \lambda_{kj} \mathbb M^{(i+j)}.
\end{equation}
We now aim to approximate the basis functions using the available observations $(Y_t)_{t \in [0,T]}$ or $\{ \widetilde Y_i \}_{i=0}^I$. Since for the construction of $\psi_k$ we only need the moments $\{ \mathbb M^{(r)} \}_{r=0}^{2k-1}$, it is sufficient to estimate them using the empirical moments 
\begin{equation} \label{eq:approximated_moments}
\widetilde{\mathbb M}^{(r)}_{T,N} = \frac1T \int_0^T Y_t^r \dd t, \qquad \widetilde{\mathbb M}^{(r)}_{I,N} = \frac1I \sum_{i=1}^I \widetilde Y_i^r,
\end{equation}
depending on whether we have continuous-time or discrete-time observations. We remark that the subscript $N$ indicates that the empirical moments depend (implicitly) on the number of particles in the system, as the observations are obtained from an interacting particle system of size $N$. We can then build an approximation $\{ \widetilde \psi_k \}_{k=0}^\infty$ of the orthonormal basis using formula \eqref{eq:det_moments}, where the exact moments are replaced by the empirical moments. Notice that we denote by $\widetilde \lambda_{kj}$ and $\widetilde c_k$ the coefficients and the normalization constant of the polynomial $\widetilde \psi_k$, respectively.
\begin{remark} \label{rem:discrete}
From now on, for the sake of simplicity, we will only consider the case of continuous-time observations. Nevertheless, all the analysis presented here still holds in the case of discrete-time observations, and therefore the methodology introduced in the next section can still be applied. We emphasize the fact that our approach only relies on the approximation of the moments of the invariant measure of the mean-field dynamics, and, due to the ergodic theorem, it does not matter whether we use discrete-time or continuous-time observations to estimate the moments. To simplify the notation, we will also remove the subscripts $T$ and $N$ when it is clear from the context that the referred quantities depend on the observation time and the number of particles.
\end{remark}
We finally recall the result from \cite[Lemma 4.3]{PaZ24}, which, due to \cref{as:unique}, using ergodicity and propagation of chaos, and assuming the particles to be initially distributed accordingly to the invariant measure of the mean-field dynamics, i.e. $\nu(\d x) = \rho(x) \dd x$, quantifies the error given by the approximation of the moments. In particular, there exists a constant $\widetilde C > 0$ independent of $T$ and $N$, such that for all $q \in [1,2)$
\begin{equation} \label{eq:estimate_moments}
\left( \E \left[ \abs{\widetilde{\mathbb M}^{(r)} - \mathbb M^{(r)}}^q \right] \right)^{\frac1q} \le \widetilde C \left( \frac1{\sqrt T} + \frac1{\sqrt N} \right),
\end{equation} 
and this estimate will be useful throughout the paper. We note that, as explained in~\cite[Remark 4.4]{PaZ24}, the stationarity assumption is made only to simplify the analysis. Due to geometric ergodicity and uniform propagation of chaos, it should be sufficient to start either at a deterministic initial condition or at a distribution that has, for example, finite relative entropy with respect to the invariant measure and finite moments of all orders. In fact, in all the numerical experiments in \cref{sec:numerics}, we do not start at stationarity, and the particle trajectory that we observe and that we use to estimate the moments is not stationary. 

In the next section, we show the convergence of the approximated orthogonal polynomials $\widetilde \psi_k$ to the corresponding $\psi_k$ in $L^2(\rho)$ as $T,N \to \infty$. 

\subsection{Convergence analysis for the approximated orthogonal polynomials} \label{sec:analysis_ortho}

Before presenting the main result of this section, that is, the convergence of $\widetilde \psi_k$ to $\psi_k$, we need the following estimates regarding the coefficients $\widetilde \lambda_{kj}$ of the polynomials and the normalization constants $\widetilde c_k$.

\begin{lemma} \label{lem:estimate_lambda}
Let $\lambda_{kj}$ be defined in equation \eqref{eq:lambda_def}, and let $\widetilde\lambda_{kj}$ be its approximation. Under \cref{as:unique}, for all $k,j \ge 0$ and for all $q \in [1,2)$ there exists a constant $C = C(k) > 0$, independent of $T$ and $N$, such that
\begin{equation}
\left( \E \left[ \abs{\lambda_{kj} - \widetilde \lambda_{kj}}^q \right] \right)^{\frac1q} \le C(k) \left( \frac1{\sqrt T} + \frac1{\sqrt N} \right).
\end{equation}
\end{lemma}
\begin{proof}
First, by \cite[Theorem 2.12]{IpR08}, we have
\begin{equation}
\abs{\lambda_{kj} - \widetilde \lambda_{kj}}^q = \abs{\det(\Lambda_{kj}) - \det(\widetilde \Lambda_{kj})}^q \le k^q \norm{\Lambda_{kj} - \widetilde \Lambda_{kj}}_2^q \max \left\{ \norm{\Lambda_{kj}}_2, \norm{\widetilde \Lambda_{kj}}_2 \right\}^{q(k-1)},
\end{equation}
which, due to the Hölder's inequality with exponent $1/q+1/2 \in (1, 3/2]$, implies
\begin{equation}
\E \left[ \abs{\lambda_{kj} - \widetilde \lambda_{kj}}^q \right] \le k^q \E \left[ \norm{\Lambda_{kj} - \widetilde \Lambda_{kj}}_2^{\frac{q}2 + 1} \right]^{\frac{2q}{q+2}} \E \left[ \max \left\{ \norm{\Lambda_{kj}}_2, \norm{\widetilde \Lambda_{kj}}_2 \right\}^{\frac{q(k-1)(2+q)}{2-q}} \right]^{\frac{2-q}{2+q}},
\end{equation}
where we remark that $q/2+1 \in [3/2,2)$. Then, since the spectral norm is bounded by the Frobenius norm and due to the uniform boundedness of the moments and the estimate \eqref{eq:estimate_moments}, following the proof of \cite[Lemma 4.5(i)]{PaZ24} we obtain the desired result. We finally remark that the dependence of $C$ on $j$ is not explicitly emphasized since $j \le k$.
\end{proof}

\begin{lemma} \label{lem:estimate_c_k}
Let $c_k$ be defined in equation \eqref{eq:c_def}, and let $\widetilde c_k$ be its approximation. Under \cref{as:unique}, for all $k \ge 0$ and for all $q \in [1,2)$ there exists a constant $C = C(k) > 0$, independent of $T$ and $N$, such that
\begin{equation}
\begin{aligned}
\mathrm{(i)}& \quad \left( \E \left[ \abs{c_k^2 - \widetilde c_k^2}^q \right] \right)^{\frac1q} \le C(k) \left( \frac1{\sqrt T} + \frac1{\sqrt N} \right), \\
\mathrm{(ii)}& \quad \left( \E \left[ \abs{c_k - \widetilde c_k}^q \right] \right)^{\frac1q} \le C(k) \left( \frac1{\sqrt T} + \frac1{\sqrt N} \right).
\end{aligned}
\end{equation}
\end{lemma}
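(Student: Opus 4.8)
The plan is to reduce both bounds to \cref{lem:estimate_lambda} and the moment estimate \eqref{eq:estimate_moments} by elementary algebra, exactly in the spirit of \cite[Lemma 4.5]{PaZ24}. For part (i) I would start from the closed form \eqref{eq:c_def}, namely $c_k^2 = \sum_{i,j=0}^k \lambda_{ki}\lambda_{kj}\mathbb M^{(i+j)}$ and, with empirical moments throughout, $\widetilde c_k^2 = \sum_{i,j=0}^k \widetilde\lambda_{ki}\widetilde\lambda_{kj}\widetilde{\mathbb M}^{(i+j)}$. Subtracting and telescoping, each summand becomes a sum of three terms in which exactly one factor is a difference $\lambda_{ki}-\widetilde\lambda_{ki}$, $\lambda_{kj}-\widetilde\lambda_{kj}$, or $\mathbb M^{(i+j)}-\widetilde{\mathbb M}^{(i+j)}$, while the other factors are either exact (hence deterministic and bounded) or ``tilde'' quantities. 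Taking $L^q$ norms, using the triangle inequality and the generalized Hölder inequality to split each product, and bounding the difference factor by \cref{lem:estimate_lambda} or \eqref{eq:estimate_moments}, one obtains the claimed rate $\tfrac1{\sqrt T}+\tfrac1{\sqrt N}$, provided the surviving tilde factors have moments of all orders bounded uniformly in $T$ and $N$. For $\widetilde{\mathbb M}^{(r)}=\tfrac1T\int_0^T Y_t^r\dd t$ this follows from Jensen's inequality together with the uniform moment bounds of \cref{as:unique}; since $\widetilde\lambda_{kj}$ is a fixed polynomial (a determinant) in finitely many empirical moments, the same holds for it.

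For part (ii) I would exploit the factorization $c_k-\widetilde c_k = (c_k^2-\widetilde c_k^2)/(c_k+\widetilde c_k)$. The empirical moments $\{\widetilde{\mathbb M}^{(r)}\}$ are the moments of the empirical measure $\tfrac1T\int_0^T\delta_{Y_t}\dd t$, which is a genuine probability measure, so the corresponding Hankel quadratic form is positive semidefinite and $\widetilde c_k^2\ge 0$; hence $\widetilde c_k\ge 0$ is well defined. On the other hand $c_k>0$ is a deterministic constant, because $\rho$ is a density with finite moments of all orders and in particular is not supported on finitely many points. Therefore $c_k+\widetilde c_k\ge c_k>0$ pointwise, so $\abs{c_k-\widetilde c_k}\le c_k^{-1}\abs{c_k^2-\widetilde c_k^2}$, and taking $L^q$ norms and invoking part (i) finishes the proof.

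The only genuinely delicate point is the uniform-in-$(T,N)$ control of high-order moments of $\widetilde\lambda_{kj}$ and of the empirical moments: because the Hölder splitting forces us to estimate these random quantities at exponents strictly larger than $2$ (so as to keep the difference factors at an exponent in $[1,2)$ where \cref{lem:estimate_lambda} and \eqref{eq:estimate_moments} apply), we need moment bounds of arbitrarily large order, and this is exactly where the bound $(\E[\abs{X_t^{(n)}}^p])^{1/p}\le\widetilde C$ for all $p\ge1$ from \cref{as:unique} enters. Everything else is routine bookkeeping; as expected, the constant $C(k)$ degrades as $k$ grows, since it depends on the sizes of the Hankel determinants involved and on $c_k^{-1}$.
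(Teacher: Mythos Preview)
Your proposal is correct and follows essentially the same route as the paper: for (i) the paper also telescopes $\lambda_{ki}\lambda_{kj}\mathbb M^{(i+j)}-\widetilde\lambda_{ki}\widetilde\lambda_{kj}\widetilde{\mathbb M}^{(i+j)}$ into three terms, applies H\"older's inequality together with \cref{lem:estimate_lambda}, \eqref{eq:estimate_moments}, and the uniform moment bounds; for (ii) it uses the identical inequality $\abs{c_k-\widetilde c_k}\le c_k^{-1}\abs{c_k^2-\widetilde c_k^2}$. Your additional remarks on why $\widetilde c_k\ge 0$ (positive semidefiniteness of the empirical Hankel form) and on the need for arbitrarily high moments of the tilde quantities are welcome clarifications that the paper leaves implicit.
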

\begin{proof}
From the definitions of $c_k$ in equation \eqref{eq:c_def} and of its empirical counterpart $\widetilde c_k$, using Jensen's inequality we deduce that
\begin{equation}
\E \left[ \abs{c_k^2 - \widetilde c_k^2}^q \right] \le C \sum_{i=0}^k \sum_{j=0}^k \E \left[ \abs{\lambda_{ki} \lambda_{kj} \mathbb M^{(i+j)} - \widetilde\lambda_{ki} \widetilde\lambda_{kj} \widetilde{\mathbb M}^{(i+j)}}^q \right],
\end{equation}
which, due to the triangle inequality, implies
\begin{equation}
\begin{aligned}
\E \left[ \abs{c_k^2 - \widetilde c_k^2}^q \right] &\le \sum_{i=0}^k \sum_{j=0}^k \E \left[ \abs{\lambda_{ki} \lambda_{kj} \left( \mathbb M^{(i+j)} - \widetilde{\mathbb M}^{(i+j)} \right)}^q \right] \\
&\quad + \sum_{i=0}^k \sum_{j=0}^k \E \left[ \abs{\lambda_{ki} \left( \lambda_{kj} - \widetilde\lambda_{kj} \right) \widetilde{\mathbb M}^{(i+j)}}^q \right] \\
&\quad + \sum_{i=0}^k \sum_{j=0}^k \E \left[ \abs{ \left( \lambda_{ki} - \widetilde\lambda_{ki} \right) \widetilde\lambda_{kj} \widetilde{\mathbb M}^{(i+j)}}^q \right].
\end{aligned}
\end{equation}
Then, applying Hölder's inequality, the boundedness of the moments, estimate \eqref{eq:estimate_moments}, and \cref{lem:estimate_lambda}, we obtain (i). Notice now that we have
\begin{equation}
\abs{c_k - \widetilde c_k} = \frac{\abs{c_k^2 - \widetilde c_k^2}}{c_k + \widetilde c_k} \le \frac{\abs{c_k^2 - \widetilde c_k^2}}{c_k},
\end{equation}
which due to (i) gives (ii) and completes the proof.
\end{proof}

Notice that \cref{lem:estimate_c_k} gives bounds on the difference between the normalization constants. However, $c_k$ and $\widetilde c_k$ appear in the denominator of orthogonal polynomials. Therefore, we need to obtain an estimate in $L^2$ of the difference of their inverse, for which the following condition on the boundedness of negative moments for $\widetilde c_k$ is required.
\begin{assumption} \label{as:inverse_ck}
For all $k \ge 0$ there exists a constant $C = C(k) > 0$, independent of $T$ and $N$, such that
\begin{equation}
\E \left[ \frac1{\widetilde c_k^r} \right] \le C(k),
\end{equation}
for some $r > 4$.
\end{assumption}
\begin{remark}
\cref{as:inverse_ck} is always satisfied as long as $\widetilde c_k$ is bounded away from zero independently of $T$ and $N$, e.g. $\widetilde c_k \ge \zeta c_k$ for some $\zeta > 0$. Moreover, in all the numerical experiments that we performed, we observed that \cref{as:inverse_ck} holds in practice. We remark that, without this condition, the convergence analysis presented in the following still goes through. However, we could only achieve convergence in probability and without quantitative convergence rates.
\end{remark}
In the next result, we derive the estimate for the inverse of $\widetilde c_k$.

\begin{lemma} \label{lem:estimate_inverse_ck}
Let $c_k$ be defined in equation \eqref{eq:c_def}, and let $\widetilde c_k$ be its approximation. Under \cref{as:unique,as:inverse_ck}, for all $k \ge 0$ and for all $q \in [1,2r/(r+2))$ there exists a constant $C = C(k) > 0$, independent of $T$ and $N$, such that
\begin{equation}
\left( \E \left[ \abs{\frac1{c_k} - \frac1{\widetilde c_k}}^q \right] \right)^{\frac1{q}} \le C(k) \left( \frac1{\sqrt T} + \frac1{\sqrt N} \right).
\end{equation}
\end{lemma}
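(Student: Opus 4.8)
The plan is to reduce everything to \cref{lem:estimate_c_k}(ii) by factoring out the deterministic constant $c_k$ and absorbing the remaining division by $\widetilde c_k$ through the negative-moment bound of \cref{as:inverse_ck}. Since $\rho$ is a genuine probability measure with finite moments of all orders, $c_k$ is a strictly positive deterministic constant (independent of $T$ and $N$), so I would first write
\begin{equation*}
\abs{\frac1{c_k} - \frac1{\widetilde c_k}} = \frac{\abs{c_k - \widetilde c_k}}{c_k\,\widetilde c_k} = \frac1{c_k}\cdot\frac{\abs{c_k - \widetilde c_k}}{\widetilde c_k},
\end{equation*}
reducing the task to an $L^q$ bound on $\abs{c_k - \widetilde c_k}/\widetilde c_k$.

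Next I would apply Hölder's inequality to separate the increment $c_k-\widetilde c_k$ from the negative power of $\widetilde c_k$. Given $q\in[1,2r/(r+2))$ we have $1/q>1/2+1/r$, so there exist exponents $a\in(1,2)$ and $b\in(q,r]$ with $1/a+1/b=1/q$ (choose $1/a$ anywhere in the nonempty interval $(1/2,\,1/q-1/r)$ and set $b=(1/q-1/a)^{-1}$). Hölder's inequality with the conjugate pair $(a/q,\,b/q)$ then gives
\begin{equation*}
\E\!\left[\frac{\abs{c_k-\widetilde c_k}^q}{\widetilde c_k^q}\right] \le \E\!\left[\abs{c_k-\widetilde c_k}^{a}\right]^{q/a}\,\E\!\left[\widetilde c_k^{-b}\right]^{q/b}.
\end{equation*}
For the first factor, $a\in[1,2)$ lets me invoke \cref{lem:estimate_c_k}(ii), which bounds it by $C(k)\,(1/\sqrt T+1/\sqrt N)^q$. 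For the second factor, $b\le r$ together with Jensen's inequality gives $\E[\widetilde c_k^{-b}]\le\E[\widetilde c_k^{-r}]^{b/r}$, which is bounded by a constant depending only on $k$ thanks to \cref{as:inverse_ck}. Multiplying through by $c_k^{-q}$ and taking the $q$-th root yields the claimed estimate.

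The hard part is not any single computation but the bookkeeping in the exponent split: the power to which $c_k-\widetilde c_k$ is raised must stay below $2$ so that \cref{lem:estimate_c_k} applies, while the negative power of $\widetilde c_k$ must stay at or below $r$ so that \cref{as:inverse_ck} applies; these two constraints, combined through $1/a+1/b=1/q$, are precisely what forces the admissible range $q<2r/(r+2)$ in the statement. Apart from this, the argument uses nothing beyond \cref{lem:estimate_c_k}, \cref{as:inverse_ck}, Hölder's and Jensen's inequalities, and the positivity of $c_k$.
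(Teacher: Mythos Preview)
Your proof is correct and follows the same approach as the paper: write $|1/c_k-1/\widetilde c_k|=|c_k-\widetilde c_k|/(c_k\widetilde c_k)$, apply H\"older to separate the increment from $\widetilde c_k^{-1}$, then invoke \cref{lem:estimate_c_k}(ii) and \cref{as:inverse_ck}. The only cosmetic difference is that the paper makes the specific choice $b=r$, $a=rq/(r-q)$ (which lands exactly on the assumption and avoids the extra Jensen step), whereas you introduce a free parameter in the split and then use Jensen to reduce to $b=r$ anyway.
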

\begin{proof}
Using Hölder's inequality with exponents $r/q$ and $r/(r-q)$, we obtain
\begin{equation}
\E \left[ \abs{\frac1{c_k} - \frac1{\widetilde c_k}}^q \right] = \E \left[ \frac{\abs{c_k - \widetilde c_k}^q}{c_k^q \widetilde c_k^q} \right] \le \frac1{c_k^q} \left( \E \left[ \abs{c_k - \widetilde c_k}^{\frac{rq}{r-q}} \right] \right)^{\frac{r-q}{r}} \left( \E \left[ \frac1{\widetilde c_k^r} \right] \right)^{\frac{q}{r}},
\end{equation}
where we note that $rq/(r-q) \in [1,2)$, and which implies
\begin{equation}
\left( \E \left[ \abs{\frac1{c_k} - \frac1{\widetilde c_k}}^q \right] \right)^{\frac1{q}} \le \frac1{c_k} \left( \E \left[ \abs{c_k - \widetilde c_k}^{\frac{rq}{r-q}} \right] \right)^{\frac{r-q}{rq}} \left( \E \left[ \frac1{\widetilde c_k^r} \right] \right)^{\frac{1}{r}}.
\end{equation}
The desired result then follows from \cref{as:inverse_ck} and \cref{lem:estimate_c_k}(ii).
\end{proof}

Using the previous estimates, we can finally prove the convergence of the approximated orthogonal polynomials in the weighted space $L^2(\rho)$.

\begin{proposition} \label{pro:estimate_psi}
Let $\psi_k$ be defined in equation \eqref{eq:ortho_poly_def}, and let $\widetilde\psi_k$ be its approximation. Under \cref{as:unique,as:inverse_ck}, for all $k \ge 0$ there exists a constant $C = C(k) > 0$, independent of $T$ and $N$, such that
\begin{equation}
\E \left[ \norm{\psi_k - \widetilde \psi_k}_{L^2(\rho)} \right] \le C(k) \left( \frac1{\sqrt T} + \frac1{\sqrt N} \right).
\end{equation}
\end{proposition}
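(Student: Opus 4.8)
The plan is to write $\psi_k - \widetilde\psi_k$ using the representation \eqref{eq:ortho_poly_def} and then split the error into two contributions: one coming from the difference of the coefficients $\lambda_{kj}$ and $\widetilde\lambda_{kj}$ in the numerator, and one coming from the difference of the reciprocals $1/c_k$ and $1/\widetilde c_k$. Concretely, I would write
\begin{equation}
\psi_k(x) - \widetilde\psi_k(x) = \frac1{c_k} \sum_{j=0}^k (\lambda_{kj} - \widetilde\lambda_{kj}) x^j + \left( \frac1{c_k} - \frac1{\widetilde c_k} \right) \sum_{j=0}^k \widetilde\lambda_{kj} x^j,
\end{equation}
so that, by the triangle inequality in $L^2(\rho)$,
\begin{equation}
\norm{\psi_k - \widetilde\psi_k}_{L^2(\rho)} \le \frac1{c_k} \sum_{j=0}^k \abs{\lambda_{kj} - \widetilde\lambda_{kj}} \, \norm{x^j}_{L^2(\rho)} + \abs{\frac1{c_k} - \frac1{\widetilde c_k}} \sum_{j=0}^k \abs{\widetilde\lambda_{kj}} \, \norm{x^j}_{L^2(\rho)}.
\end{equation}
Here $\norm{x^j}_{L^2(\rho)}^2 = \mathbb M^{(2j)}$ is a fixed constant depending only on $k$ by the uniform moment bounds in \cref{as:unique}.

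For the first term, I would take expectations and apply \cref{lem:estimate_lambda} with $q = 1$ directly to each summand; this gives a bound of order $C(k)(1/\sqrt T + 1/\sqrt N)$. For the second term, the issue is that it is a product of two random quantities, $\abs{1/c_k - 1/\widetilde c_k}$ and $\sum_j \abs{\widetilde\lambda_{kj}}\norm{x^j}_{L^2(\rho)}$, so a plain expectation does not factor. I would use Hölder's inequality: pick exponents $q$ and $q'$ with $1/q + 1/q' = 1$ where $q \in [1, 2r/(r+2))$ is admissible in \cref{lem:estimate_inverse_ck} (note $2r/(r+2) > 1$ since $r > 4$, so such $q > 1$ exists), so that
\begin{equation}
\E\left[ \abs{\frac1{c_k} - \frac1{\widetilde c_k}} \sum_{j=0}^k \abs{\widetilde\lambda_{kj}} \norm{x^j}_{L^2(\rho)} \right] \le \left( \E\left[ \abs{\frac1{c_k} - \frac1{\widetilde c_k}}^q \right] \right)^{1/q} \left( \E\left[ \Big( \sum_{j=0}^k \abs{\widetilde\lambda_{kj}} \norm{x^j}_{L^2(\rho)} \Big)^{q'} \right] \right)^{1/q'}.
\end{equation}
The first factor is $O(1/\sqrt T + 1/\sqrt N)$ by \cref{lem:estimate_inverse_ck}. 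The second factor must be shown to be bounded by a constant $C(k)$ independent of $T$ and $N$: since $\abs{\widetilde\lambda_{kj}} \le \abs{\lambda_{kj}} + \abs{\lambda_{kj} - \widetilde\lambda_{kj}}$, and \cref{lem:estimate_lambda} controls arbitrary moments of the difference up to order just below $2$ (one can in fact bootstrap to higher moments via the same determinant-perturbation argument of \cite[Theorem 2.12]{IpR08} together with the uniform boundedness of all moments of $\rho$, which bound the Frobenius norms of $\Lambda_{kj}$ and $\widetilde\Lambda_{kj}$), the quantity $\E[(\sum_j \abs{\widetilde\lambda_{kj}})^{q'}]$ is finite and bounded uniformly — choosing $q$ slightly above $1$ keeps $q'$ large but still finite. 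Combining the two terms yields the claimed estimate.

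The main obstacle is the uniform-in-$(T,N)$ bound on $\E[(\sum_j \abs{\widetilde\lambda_{kj}} \norm{x^j}_{L^2(\rho)})^{q'}]$, i.e. controlling a possibly high moment of the empirical coefficients $\widetilde\lambda_{kj}$. The cleanest route is to observe that $\widetilde\lambda_{kj}$ is a polynomial (a determinant) in the empirical moments $\widetilde{\mathbb M}^{(r)}$ for $r \le 2k-1$, and that all moments of $\widetilde{\mathbb M}^{(r)}$ are bounded uniformly in $T,N$: indeed $\abs{\widetilde{\mathbb M}^{(r)}} = \abs{\frac1T\int_0^T Y_t^r\,\d t} \le \frac1T\int_0^T \abs{Y_t}^r\,\d t$, whose $p$-th moment is controlled by $\E[\abs{X_t^{(\bar n)}}^{rp}] \le \widetilde C^{rp}$ via Jensen and \cref{as:unique}. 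Hence every polynomial expression in the $\widetilde{\mathbb M}^{(r)}$, in particular $\widetilde\lambda_{kj}$, has all moments bounded uniformly in $T,N$, which supplies exactly the second Hölder factor. Everything else is a routine assembly of \cref{lem:estimate_lambda} and \cref{lem:estimate_inverse_ck}.
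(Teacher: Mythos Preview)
Your proof is correct and follows the same overall plan as the paper, but you use the \emph{opposite} add-and-subtract decomposition: you write
\[
\frac{\lambda_{kj}}{c_k} - \frac{\widetilde\lambda_{kj}}{\widetilde c_k} = \frac{\lambda_{kj} - \widetilde\lambda_{kj}}{c_k} + \widetilde\lambda_{kj}\Bigl(\frac1{c_k} - \frac1{\widetilde c_k}\Bigr),
\]
whereas the paper writes
\[
\frac{\lambda_{kj}}{c_k} - \frac{\widetilde\lambda_{kj}}{\widetilde c_k} = \frac{\lambda_{kj} - \widetilde\lambda_{kj}}{\widetilde c_k} + \lambda_{kj}\Bigl(\frac1{c_k} - \frac1{\widetilde c_k}\Bigr).
\]
The paper's choice pairs the \emph{deterministic} coefficients $\lambda_{kj}$ with the $1/c$-difference, so that term needs only $\E\bigl[|1/c_k - 1/\widetilde c_k|\bigr]$ from \cref{lem:estimate_inverse_ck}; the remaining term is handled by H\"older with $\E[1/\widetilde c_k^r]$ (directly from \cref{as:inverse_ck}) against the $\lambda$-difference from \cref{lem:estimate_lambda}. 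Your choice makes the first term trivial (since $1/c_k$ is deterministic) but forces a H\"older argument on the second, requiring a uniform-in-$(T,N)$ bound on a possibly high moment of $\sum_j|\widetilde\lambda_{kj}|$. Your justification of that bound --- $\widetilde\lambda_{kj}$ is a fixed polynomial in the empirical moments $\widetilde{\mathbb M}^{(r)}$, and all moments of $\widetilde{\mathbb M}^{(r)}$ are uniformly bounded by Jensen and \cref{as:unique} --- is correct. The paper's decomposition is slightly more economical since it sidesteps this extra step and uses only the lemmas already proved; yours trades that for a simpler first term.
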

\begin{proof}
By definition of $\psi_k$ and $\widetilde \psi_k$ we have
\begin{equation}
\norm{\psi_k - \widetilde \psi_k}_{L^2(\rho)}^2 = \int_{\R^d} \left( \frac1{c_k} \sum_{j=0}^k \lambda_{kj} x^j - \frac1{\widetilde c_k} \sum_{j=0}^k \widetilde\lambda_{kj} x^j \right)^2 \rho(x) \dd x,
\end{equation}
which due to the triangle inequality gives
\begin{equation}
\begin{aligned}
\norm{\psi_k - \widetilde \psi_k}_{L^2(\rho)}^2 &\le \frac2{\widetilde c_k^2} \int_{\R^d} \left( \sum_{j=0}^k ( \lambda_{kj} - \widetilde\lambda_{kj} ) x^j \right)^2 \rho(x) \dd x \\
&\quad + 2 \left( \frac1{c_k} - \frac1{\widetilde c_k} \right)^2 \int_{\R^d} \left( \sum_{j=0}^k \lambda_{kj} x^j \right)^2 \rho(x) \dd x.
\end{aligned}
\end{equation}
By expanding the square we get
\begin{equation}
\begin{aligned}
\norm{\psi_k - \widetilde \psi_k}_{L^2(\rho)}^2 &\le \frac2{\widetilde c_k^2} \sum_{i=0}^k \sum_{j=0}^k ( \lambda_{ki} - \widetilde\lambda_{ki} ) ( \lambda_{kj} - \widetilde\lambda_{kj} ) \mathbb M^{(i+j)} \\
&\quad + 2 \left( \frac1{c_k} - \frac1{\widetilde c_k} \right)^2 \sum_{i=0}^k \sum_{j=0}^k \lambda_{ki} \lambda_{kj} \mathbb M^{(i+j)},
\end{aligned}
\end{equation}
which implies
\begin{equation}
\norm{\psi_k - \widetilde \psi_k}_{L^2(\rho)}^2 \le C \frac1{\widetilde c_k^2} \left( \sum_{j=0}^k \abs{\lambda_{kj} - \widetilde\lambda_{kj}} \right)^2 + C \left( \frac1{c_k} - \frac1{\widetilde c_k} \right)^2 \left( \sum_{j=0}^k \abs{\lambda_{kj}} \right)^2.
\end{equation}
Therefore, using Hölder's inequality with exponents $r$ and $q = r/(r-1) \in (1,2)$, we have
\begin{equation}
\begin{aligned}
\E \left[ \norm{\psi_k - \widetilde \psi_k}_{L^2(\rho)} \right] &\le C \E \left[ \frac1{\widetilde c_k} \sum_{j=0}^k \abs{\lambda_{kj} - \widetilde\lambda_{kj}} \right] + C \E \left[ \abs{\frac1{c_k} - \frac1{\widetilde c_k}} \right] \\
&\le C \E \left( \left[ \frac1{\widetilde c_k^r} \right] \right)^{1/r} \left( \sum_{j=0}^k \E \left[ \abs{\lambda_{kj} - \widetilde\lambda_{kj}} \right]^q \right)^{1/q} + C \E \left[ \abs{\frac1{c_k} - \frac1{\widetilde c_k}} \right],
\end{aligned}
\end{equation}
which, due to \cref{as:inverse_ck,lem:estimate_lambda,lem:estimate_inverse_ck}, yields the desired result.
\end{proof}

\section{Generalized method of moments} \label{sec:method}

In this section, we present our inference methodology for learning the interaction kernel $W'$ in \eqref{eq:interacting_particles} from a single particle observation. Let $\{ \psi_k \}_{k=0}^\infty$ be the orthonormal basis made of orthogonal polynomials introduced in the previous section, and let $\{ \widetilde\psi_k \}_{k=0}^\infty$ be its approximation obtained using empirical moments. We now introduce the integrability (with respect to the invariant measure of the mean-field dynamics) and regularity assumptions on the confining and interaction potentials, as well as the approximability property of the weighted $L^2$ space.
\begin{assumption} \label{as:L2_rho}
The confining and interaction potentials satisfy $V, \,W \in \mathcal C^2(\R) \cap H^1(\rho)$, where $H^1(\rho)$ denotes the standard weighted Sobolev space, and the set of polynomials is dense in $L^2(\rho)$. Moreover, $W' * \rho \in L^2(\rho)$ and $V'$ and $V''$ are polynomially bounded. 
\end{assumption}

\begin{remark}
The fact that the set of polynomials is dense together with the orthogonality of the polynomials by construction implies that the set $\{ \psi_k \}_{k=0}^\infty$ forms an orthonormal basis of $L^2(\rho)$. The problem of finding conditions that guarantee that the set of polynomials is dense in weighted $L^2$ spaces has been thoroughly investigated and is directly related to the Hamburger moment problem. Sufficient conditions based on the definition of Nevanlinna extremal (N-extremal) solutions are given in \cite[Section 2.3]{Akh21} and \cite{Ber96}. Moreover, in \cite{Rod03} the analysis is extended to the general case of weighted Sobolev spaces $W^{k,p}(\rho)$, and conditions on the weight $\rho$ are provided.
\end{remark}

Consider the Fourier expansions of both $V'$ and $W'$ with respect to the basis $\{ \psi_k \}_{k=0}^\infty$ made of orthogonal polynomials 
\begin{equation}
V'(x) = \sum_{k=0}^\infty \alpha_k \psi_k(x), \qquad W'(x) = \sum_{k=0}^{\infty} \beta_k \psi_k(x).
\end{equation}
We emphasize that the confining potential is assumed to be known. Our goal is to infer the coefficients $\beta_k$ from approximations of the coefficients $\alpha_k$. In particular, consider the McKean--Vlasov PDE \eqref{eq:McKean_Vlasov}, and replace $V'$ and $W'$ with their Fourier expansions
\begin{equation}
\frac{\d}{\d x} \left( \sum_{k=0}^\infty (\alpha_k \psi_k(x) + \beta_k (\psi_k * \rho)(x)) \rho(x) \right) + \sigma \frac{\d^2 \rho}{\d x^2}(x) = 0.
\end{equation}
Then, multiply the equation by the test function $\int \psi_i$, integrate over $\R$ and then by parts to obtain
\begin{equation}
\sum_{k=0}^\infty \alpha_k \int_\R \psi_i(x) \psi_k(x) \rho(x) \dd x + \sum_{k=0}^\infty \beta_k \int_\R \psi_i(x) (\psi_k * \rho)(x) \rho(x) \dd x = \sigma \int_\R \psi_i'(x) \rho(x) \dd x,
\end{equation}
which, due to the fact that the functions $\{ \psi_k \}_{k=0}^\infty$ are orthonormal, yields
\begin{equation} \label{eq:equation_infinity}
\alpha_i + \sum_{k=0}^\infty \beta_k \E^\rho[ \psi_i(X) (\psi_k * \rho)(X) ] = \sigma \E^\rho[\psi_i'(X)].
\end{equation}
Let us now introduce the notation
\begin{equation}
B_{ik} = \E^\rho[ \psi_i(X) (\psi_k * \rho)(X) ], \qquad \gamma_i = \E^\rho[\psi_i'(Y)],
\end{equation}
and notice that the quantities $\alpha_i$ and $\gamma_i$ can be computed as follows
\begin{equation} \label{eq:def_alpha_gamma}
\begin{aligned}
\alpha_i &= \int_\R V'(x) \psi_i(x) \rho(x) \dd x = \frac1{c_i} \sum_{j=0}^i \lambda_{ij} \int_\R V'(x) x^j \rho(x) \dd x = \frac1{c_i} \sum_{j=0}^i \lambda_{ij} \E^\rho \left[ V'(X) X^j \right], \\
\gamma_i &= \int_\R \psi_i'(x) \rho(x) \dd x = \frac1{c_i} \sum_{j=1}^i j \lambda_{ij} \int_\R x^{j-1} \rho(x) \dd x = \frac1{c_i} \sum_{j=1}^i j \lambda_{ij} \mathbb M^{(j-1)}.
\end{aligned}
\end{equation}
Moreover, by the binomial theorem, we have
\begin{equation} \label{eq:Bik}
\begin{aligned}
B_{ik} &= \int_\R \psi_i(x) \int_\R \psi_k(x-y) \rho(y) \dd y \rho(x) \dd x \\
&= \frac1{c_i c_k} \sum_{\ell=0}^i \sum_{j=0}^k \lambda_{i\ell} \lambda_{kj} \int_\R \int_\R x^\ell (x-y)^j \rho(y) \rho(x) \dd y \dd x \\
&= \frac1{c_i c_k} \sum_{\ell=0}^i \sum_{j=0}^k \lambda_{i\ell} \lambda_{kj} \int_\R \int_\R x^\ell \sum_{m=0}^j \binom{j}{m} (-1)^{j-m} x^m y^{j-m} \rho(y) \rho(x) \dd y \dd x \\
&= \frac1{c_i c_k} \sum_{\ell=0}^i \sum_{j=0}^k \sum_{m=0}^j (-1)^{j-m} \binom{j}{m} \lambda_{i\ell} \lambda_{kj} \mathbb M^{(m+\ell)} \mathbb M^{(j-m)}.
\end{aligned}
\end{equation}
If we then truncate the Fourier series up to order $K$ and therefore consider the indices $i,k = 0, \dots, K$, from equation \eqref{eq:equation_infinity}, we obtain the linear system of dimension $K+1$
\begin{equation} \label{eq:system_truncated}
B^{(K)} \beta^{(K)} = \sigma \gamma^{(K)} - \alpha^{(K)} - e^{(K)},
\end{equation}
where
\begin{equation} \label{eq:system_terms}
\begin{aligned}
\alpha^{(K)} &= \begin{bmatrix} \alpha_0 & \cdots & \alpha_K \end{bmatrix}^\top \in \R^{K+1}, \\
\beta^{(K)} &= \begin{bmatrix} \beta_0 & \cdots & \beta_K \end{bmatrix}^\top \in \R^{K+1}, \\
\gamma^{(K)} &= \begin{bmatrix} \gamma_0 & \cdots & \gamma_K \end{bmatrix}^\top \in \R^{K+1}, \\
e^{(K)} &= \begin{bmatrix} \epl_0^{(K)} & \cdots & \epl_K^{(K)} \end{bmatrix}^\top \in \R^{K+1} \quad \text{with} \quad \epl_i^{(K)} = \sum_{k=K+1}^\infty B_{ik} \beta_k,
\end{aligned}
\end{equation}
and $B \in \R^{(K+1) \times (K+1)}$ is the matrix whose entries are given in equation \eqref{eq:Bik}. It now remains to approximate the entries of the matrix and the right-hand side in the system \eqref{eq:system_truncated}, since they cannot be computed exactly. Due to ergodicity and propagation of chaos, using the empirical moments, and neglecting the reminder $e^{(K)}$, we define the linear system whose solution $\widetilde \beta^{(K)}_{T,N}$ is the estimator of the coefficients of the Fourier expansion of the interaction kernel as
\begin{equation} \label{eq:linear_system_approx}
\widetilde B^{(K)}_{T,N} \widetilde \beta^{(K)}_{T,N} = \sigma \widetilde \gamma^{(K)}_{T,N} - \widetilde \alpha^{(K)}_{T,N},
\end{equation}
where
\begin{equation} \label{eq:terms_linear_system_def}
\begin{aligned}
(\widetilde\alpha^{(K)}_{T,N})_i &= \frac1{\widetilde c_i} \sum_{j=0}^i \widetilde\lambda_{ij} \frac1T \int_0^T V'(Y_t) Y_t^j \dd t, \\
(\widetilde\gamma^{(K)}_{T,N})_i &= \frac1{\widetilde c_i} \sum_{j=1}^i j \widetilde\lambda_{ij} \widetilde{\mathbb M}^{(j-1)}, \\
(\widetilde B^{(K)}_{T,N})_{ik} &= \frac1{\widetilde c_i \widetilde c_k} \sum_{\ell=0}^i \sum_{j=0}^k \sum_{m=0}^j (-1)^{j-m} \binom{j}{m} \widetilde\lambda_{i\ell} \widetilde\lambda_{kj} \widetilde{\mathbb M}^{(m+\ell)} \widetilde{\mathbb M}^{(j-m)}.
\end{aligned}
\end{equation}
\begin{remark}
To solve the linear system \eqref{eq:linear_system_approx}, the diffusion coefficient $\sigma$ is assumed to be known, since this work focuses on the inference problem for the interaction kernel. Nevertheless, it is also possible to identify both the diffusion coefficient and the interaction kernel sequentially. Specifically, given a particle trajectory $(Y_t)_{t \in [0,T]}$ as defined in equation \eqref{eq:observations}, the diffusion coefficient can be estimated using the quadratic variation (see, e.g., \cite[Section 5.3]{Pav14})
\begin{equation}
\mathcal{Q}^\Delta_{T,N} = \sum_{j=1}^J (Y_{j\Delta} - Y_{(j-1)\Delta})^2,
\end{equation}
where $\Delta > 0$ is the time step and $J = T / \Delta$. In fact, it holds that
\begin{equation}
\lim_{\Delta \to 0} \mathcal{Q}^\Delta_{T,N} = 2\sigma T, \qquad \text{a.s.},
\end{equation}
independently of $N$, which implies that an estimator for the diffusion coefficient is given by
\begin{equation}
\widehat{\sigma}^\Delta_{T,N} = \frac{\mathcal{Q}^\Delta_{T,N}}{2T}.
\end{equation}
In addition, our spectral estimator from~\cite{PaZ22} can be used to infer the diffusion coefficient using low-frequency observations. The estimated diffusion coefficient can then be used in the subsequent estimation of the interaction kernel. A natural direction for future research is to rigorously analyze how the error in estimating the diffusion coefficient propagates to the kernel estimator. We expect this to lead to an additional error term in the convergence analysis, which would appear on the right-hand side of \cref{thm:estimate_W}.
\end{remark}
If $\det(\widetilde B^{(K)}_{T,N}) \neq 0$, then this definition is sufficient from the practical point of view, as we will observe in the following numerical experiments. However, in order to analyze the convergence properties of the estimator of the interaction kernel, similarly to what was done in \cite{PaZ24}, we need to introduce a sequence of convex and compact sets $\{ \mathcal B_K \}_{K=0}^\infty$, $\mathcal B_K \subset \R^{K+1}$, for the admissible coefficients. Then, let $\widehat \beta_{T,N}^{(K)}$ be the projection of the solution $\widetilde \beta^{(K)}_{T,N}$ of the linear system \eqref{eq:linear_system_approx} onto the convex and compact set $\mathcal B_K$
\begin{equation} \label{eq:estimator_beta}
\widehat \beta_{T,N}^{(K)} = \argmin_{\beta \in \mathcal B_K} \norm{\beta - \widetilde \beta^{(K)}_{T,N}}.
\end{equation}
Finally, the estimator of the interaction kernel is given by
\begin{equation} \label{eq:estimator_W}
(\widehat W')_{T,N}^{(K)}(x) = \sum_{k=0}^K (\widehat \beta_{T,N}^{(K)})_k \widetilde\psi_k(x) = \sum_{k=0}^K \frac{(\widehat \beta_{T,N}^{(K)})_k}{\widetilde c_k} \sum_{j=0}^k \widetilde\lambda_{kj} x^j,
\end{equation}
and in \cref{alg:procedure} we summarize the main steps needed for its construction. We emphasize again that the system size $N$ is not required to infer the interaction kernel. The subscript $N$ in the estimator $(\widehat{W}')_{T,N}^{(K)}$ simply indicates that it is based on observations generated by an interacting particle system with $N$ particles, and therefore it depends on $N$ only implicitly.

\begin{algorithm}[t]
\caption{Estimation of $W' \in L^2(\rho)$} \label{alg:procedure}
\begin{tabbing}
\hspace*{\algorithmicindent} \textbf{Input:} \= Drift function $V' \in L^2(\rho)$. \\
\> Observed trajectory $(Y_t)_{t\in[0,T]}$. \\
\> Number of Fourier coefficients $K$. \\
\> Set of admissible coefficients $\mathcal B_K \subset \R^{K+1}$. \\
\> Diffusion coefficient $\sigma > 0$.
\end{tabbing}
\begin{tabbing}
\hspace*{\algorithmicindent} \textbf{Output:} \= Estimator $(\widehat W')_{T,N}^{(K)}$ of $W'$.
\end{tabbing}
\begin{enumerate}[label=\arabic*:,itemsep=5pt]
\item Compute the approximated moments $\widetilde{\mathbb M}_{T,N}^{(r)}$ for $r = 0, \dots, 2K$ \\ from equation \eqref{eq:approximated_moments}.
\item Construct the approximated orthogonal polynomials $\{ \widetilde \psi_k \}_{k=0}^K$ \\ from equations \eqref{eq:ortho_poly_def}, \eqref{eq:lambda_def}, \eqref{eq:c_def}.
\item Construct the matrix $\widetilde B^{(K)}_{T,N} \in \R^{(K+1) \times (K+1)}$ and the vectors \\ $\widetilde \alpha^{(K)}_{T,N}, \widetilde \gamma^{(K)}_{T,N} \in \R^{K+1}$ from equation \eqref{eq:terms_linear_system_def}.
\item Compute the projection $\widehat \beta_{T,N}^K$ onto $\mathcal B_K$ of the solution $\widetilde \beta^{(K)}_{T,N}$ \\ of the linear system \eqref{eq:linear_system_approx}.
\item Construct the estimator $(\widehat W')_{T,N}^{(K)}$ from equation \eqref{eq:estimator_W}.
\end{enumerate}
\centering
\vspace{-0.25cm}
\noindent\rule{0.92\textwidth}{0.4pt}

\vspace{0.1cm}
\begin{tikzpicture}[scale=0.8]
\begin{small}
\draw[] (0,0) rectangle (3,2) node[pos=.5, align=center] {Empirical \\ moments \\ $\widetilde{\mathbb M}_{T,N}^{(r)}$};
\draw[->] (3,1) -- (4.5,1);
\draw[] (4.5,0) rectangle (7.5,2) node[pos=.5, align=center] {Orthonormal \\ basis \\ $\widetilde \psi_k$};
\draw[->] (7.5,1) -- (9,1);
\draw[] (9,0) rectangle (12,2) node[pos=.5, align=center] {Linear \\ system \\ $\widehat \beta^{(K)}_{T,N}$};
\draw[->] (12,1) -- (13.5,1);
\draw[] (13.5,0) rectangle (16.5,2) node[pos=.5, align=center] {Interaction \\ estimator \\ $(\widehat W')_{T,N}^{(K)}$};
\end{small}
\end{tikzpicture}
\end{algorithm}

\begin{remark}
We believe that one of the main advantages of the proposed inference methodology is that it requires the observation of a single-particle trajectory of the interacting particle system. On the other hand, if multiple trajectories are available, it would be possible to use them to obtain a more precise approximation of the interaction kernel. In particular, a more accurate estimate of the empirical moments could be obtained by averaging over all available particle trajectories. Alternatively, we could first compute an estimator of the Fourier coefficients of the interaction kernel for each particle and then average. We expect that averaging over many particle observations will reduce the variance of the resulting estimator. However, the study of the best averaging strategy and the variance of the estimator is beyond the scope of this work.
\end{remark}

Before studying the dependence of the estimator $(\widehat W')_{T,N}^{(K)}$ on its parameters $T,N,$ and $K$, in the next section we discuss the problem of inferring the drift term $V'$.

\subsection{Inference of the drift term}

In case also the drift term $V'$ needs to be estimated, and therefore the vector $\widetilde \alpha_{T,N}^{(K)}$ in the linear system \eqref{eq:linear_system_approx} cannot be computed, but it is an unknown, then we would have
\begin{equation}
\begin{bmatrix} \mathcal I_{K+1} & \widetilde B^{(K)}_{T,N} \end{bmatrix}
\begin{bmatrix} \widetilde \alpha^{(K)}_{T,N} \\ \widetilde \beta^{(K)}_{T,N} \end{bmatrix}
= \sigma \widetilde \gamma^{(K)}_{T,N},
\end{equation}
where $\mathcal I_{K+1} \in \R^{(K+1)\times(K+1)}$ denotes the identity matrix, and which is an underdetermined system. This can also be seen formally from equation \eqref{eq:equation_infinity}, where we have ``infinitely many'' linear equations for ``$2$ $\times$ infinitely many'' unknowns. Therefore, our methodology does not allow us to simultaneously infer both the drift term and the interaction kernel. We remark that this property is common to all the methods that rely on the observation of a single particle and consequently leverage the stationary Fokker--Planck equation to derive estimators. This is detailed in the next result.

\begin{proposition} \label{pro:identifiability}
Starting from the McKean--Vlasov PDE \eqref{eq:FP}, it is not possible to uniquely determine both the drift term and the interaction kernel in the interacting particle system \eqref{eq:interacting_particles} from the observation of a single trajectory.
\end{proposition}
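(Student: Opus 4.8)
The plan is to exhibit a one-parameter (indeed infinite-dimensional) family of pairs $(V',W')$ that produce exactly the same single-particle law, hence the same McKean--Vlasov PDE \eqref{eq:FP}, and therefore cannot be distinguished by any estimator built from one trajectory. The key observation is that the drift felt by a single particle in the mean-field limit is the sum $V'(x) + (W'*u(t,\cdot))(x)$, and at stationarity this is $V'(x) + (W'*\rho)(x)$; only this sum enters \eqref{eq:mean_field} and \eqref{eq:FP}. So the strategy is: fix the observed law $u(t,\cdot)$ (equivalently, fix the stationary density $\rho$ and the whole trajectory law), and show that the decomposition of the effective drift into a ``confining'' part and a ``convolution with $\rho$'' part is not unique.

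First I would set up the argument at stationarity, where it is cleanest. Suppose $(V',W')$ generates invariant density $\rho$ via the self-consistency equation \eqref{eq:invariant_measure}. Pick any sufficiently regular function $h$ (say $h \in \mathcal C^2(\R) \cap H^1(\rho)$ with $h*\rho \in L^2(\rho)$, so that Assumption~\ref{as:L2_rho} is preserved) and define the perturbed pair
\begin{equation}
\widehat V'(x) = V'(x) + (h*\rho)(x), \qquad \widehat W'(x) = W'(x) - h(x).
\end{equation}
Then $\widehat V'(x) + (\widehat W'*\rho)(x) = V'(x) + (W'*\rho)(x)$ pointwise, so $\rho$ still solves \eqref{eq:McKean_Vlasov}/\eqref{eq:invariant_measure} with the new pair, and in fact the time-dependent PDE \eqref{eq:FP} is unchanged provided the convolution term is evaluated against $\rho$ rather than $u(t,\cdot)$ --- which is exactly the linearization under uniform propagation of chaos that underlies the whole inference methodology (cf.\ the discussion after \cite{PaZ25} in the introduction). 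Since the law of the observed particle $(Y_t)_{t\in[0,T]}$ depends on $(V',W')$ only through this effective drift, the two models are observationally equivalent, and no function of the data can separate $(V',W')$ from $(\widehat V',\widehat W')$. Taking $h \not\equiv 0$ (with $h$ not already absorbed into a trivial redefinition) shows non-uniqueness. One can also phrase this via equation \eqref{eq:equation_infinity}: it gives $\alpha_i + \sum_k \beta_k B_{ik} = \sigma\gamma_i$ for each $i$, i.e.\ one scalar equation per $i$ constraining the \emph{two} sequences $\{\alpha_i\}$ and $\{\beta_i\}$, an underdetermined system exactly as noted in the paragraph preceding the proposition; the null directions are precisely the perturbations $\alpha \mapsto \alpha + B^\top(\text{something})$-type shifts corresponding to the $h$ above.

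The main obstacle --- really the only subtle point --- is making precise in what class the non-identifiability holds, i.e.\ ensuring the perturbed pair still satisfies the standing assumptions (ergodicity, unique invariant measure, polynomial boundedness, $H^1(\rho)$ membership) so that the counterexample is not vacuous. I would handle this by restricting $h$ to a convenient subclass, e.g.\ $h$ a polynomial or a bounded smooth function with bounded derivatives, and checking that $(h*\rho)$ inherits polynomial growth from the moment bounds in Assumption~\ref{as:unique}, so that $\widehat V'$ remains polynomially bounded and strictly-convex-up-to-lower-order-terms if one wants to stay literally in the setting of \cite{Mal01}; if preserving strict convexity of the confining part is delicate, it suffices to observe that the proposition only claims non-identifiability of the \emph{pair}, so it is enough that \emph{some} admissible perturbation exists, and a small multiple $\epsilon h$ with $h$ smooth and compactly-supported-in-derivative will do. A secondary, purely expository point is to note that the same obstruction already appears at the level of the stationary Fokker--Planck equation alone (one observes only $\rho$, which pins down $V' + W'*\rho$ but not the split), which is why the phenomenon is shared by all single-trajectory methods; I would include one sentence to that effect to match the remark preceding the statement.
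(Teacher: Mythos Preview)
Your proof is correct and uses essentially the same construction as the paper: perturb the pair $(V',W')$ by $(\pm(h*\rho),\mp h)$ so that the effective drift $V' + W'*\rho$ is unchanged, hence the stationary Fokker--Planck equation \eqref{eq:McKean_Vlasov} is preserved. The paper's version is terser---it just checks that identity and stops---whereas you add the (welcome but inessential) discussion of regularity preservation and the link to the underdetermined system \eqref{eq:equation_infinity}.
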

\begin{proof}
We show that there exist infinite combinations of $V'$ and $W'$ that give the same stationary Fokker--Planck equation and therefore the same invariant measure. Let $f \colon \R \to \R$ and define $\widetilde V, \widetilde W \colon \R \to \R$ as
\begin{equation}
\widetilde V(x) = V(x) - (f*\rho)(x) \qquad \text{and} \qquad \widetilde W(x) = W(x) + f(x).
\end{equation}
Then, we have
\begin{equation}
\widetilde V' + (\widetilde W' * \rho) = V' - (f'*\rho) + (W'*\rho) + (f'*\rho) = V' + (W' * \rho),
\end{equation}
which implies that the stationary Fokker--Planck equation for $\widetilde V'$ and $\widetilde W'$ coincides with equation \eqref{eq:McKean_Vlasov} for $V'$ and $W'$.
\end{proof}

\begin{remark}
In principle, our methodology could recover both $V'$ and $W'$ if one further assumes that the subspaces spanned by their generalized Fourier coefficients are orthogonal, i.e., that the sets of non-zero Fourier coefficients with respect to the orthonormal basis are disjoint. However, this is not a realistic assumption in most practical settings
\end{remark}

Nevertheless, if one knows the interaction kernel $W'$ and is only interested in estimating the drift term $V'$, then the problem reduces to the inference problem of a scalar diffusion process and we have a closed-form expression for the Fourier coefficients of the drift term
\begin{equation}
\widetilde \alpha^{(K)}_{T,N} = \sigma \widetilde \gamma^{(K)}_{T,N} - \widetilde B^{(K)}_{T,N} \widetilde \beta^{(K)}_{T,N}.
\end{equation}
We also mention that in the absence of interaction between the particles, the estimator
\begin{equation}
\widetilde \alpha^{(K)}_{T,N} = \sigma \widetilde \gamma^{(K)}_{T,N}
\end{equation}
is not affected by the additional approximation error of ignoring $e^{(K)}$ in the exact system \eqref{eq:system_truncated}, since $e^{(K)} = 0$ in this case. Then, similarly to the previous section, the estimator of the drift term is given by
\begin{equation}
(\widehat V')_{T,N}^{(K)}(x) = \sum_{k=0}^K (\widehat \alpha_{T,N}^{(K)})_k \widetilde\psi_k(x) = \sum_{k=0}^K \frac{(\widehat \alpha_{T,N}^{(K)})_k}{\widetilde c_k} \sum_{j=0}^k \widetilde\lambda_{kj} x^j,
\end{equation}
where $\widehat \alpha_{T,N}^{(K)}$ is the projection of $\widetilde \alpha_{T,N}^{(K)}$ onto the set of admissible Fourier coefficients.

\subsection{Convergence analysis for the interaction kernel estimator} \label{sec:analysis_method}

In this section, we study the convergence properties of the estimator $(\widehat W')_{T,N}^{(K)}$ defined in equation \eqref{eq:estimator_W} as $T,N,K \to \infty$. We note that $K$ is the number of Fourier coefficients that we use to approximate the interaction kernel. Therefore, fixing $K$ gives the best possible approximation that can be reached in the ideal setting in which we observe an infinite trajectory from a system of infinite particles. On the other hand, we will observe that increasing $K$ results in a worse conditioning of the linear system to be solved, which in turn implies that larger values of $T$ and $N$ are necessary in order to obtain an accurate approximation of the interaction kernel. Our study follows the approach in \cite{PaZ24}, and is based on a forward error stability analysis for the linear system \eqref{eq:linear_system_approx}, which can be seen as a perturbation of equation \eqref{eq:system_truncated}. In the next lemma, whose proof is inspired by the proof of \cite[Lemma 4.5]{PaZ24}, we quantify the error that we commit in replacing the exact moments with the empirical moments on both the left-hand side and the right-hand side of the linear system.

\begin{lemma} \label{lem:estimate_terms_system}
Let $\gamma^{(K)}$, $\alpha^{(K)}$, $B^{(K)}$ be defined in equations \eqref{eq:def_alpha_gamma}, \eqref{eq:Bik}, \eqref{eq:system_truncated}, and let $\widetilde\gamma^{(K)}_{T,N}$, $\widetilde\alpha^{(K)}_{T,N}$, $\widetilde B^{(K)}_{T,N}$ be their approximations defined in equation \eqref{eq:terms_linear_system_def}. Under \cref{as:unique,as:inverse_ck,as:L2_rho}, for all $K \ge 0$ there exists a constant $C = C(K) > 0$, independent of $T$ and $N$, such that
\begin{equation}
\begin{aligned}
\mathrm{(i)}& \quad \E \left[ \norm{\widetilde \gamma^{(K)}_{T,N} - \gamma^{(K)}} \right] \le C(K) \left( \frac1{\sqrt T} + \frac1{\sqrt N} \right), \\
\mathrm{(ii)}& \quad \E \left[ \norm{\widetilde \alpha^{(K)}_{T,N} - \alpha^{(K)}} \right] \le C(K) \left( \frac1{\sqrt T} + \frac1{\sqrt N} \right), \\
\mathrm{(iii)}& \quad \E \left[ \norm{\widetilde B^{(K)}_{T,N} - B^{(K)}} \right] \le C(K) \left( \frac1{\sqrt T} + \frac1{\sqrt N} \right).
\end{aligned}
\end{equation}
\end{lemma}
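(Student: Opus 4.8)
The plan is to prove (i)--(iii) entrywise, exploiting that, up to fixed combinatorial constants depending only on $K$, each entry of $\gamma^{(K)}$, $\alpha^{(K)}$ and $B^{(K)}$ is a finite sum of products of the elementary ``building blocks'' $1/c_i$, $\lambda_{ij}$ and moments $\mathbb M^{(r)}$ (and, for (ii), also the observable average $\E^\rho[V'(X)X^j]$), while the corresponding empirical entry is the same expression with each building block replaced by its empirical counterpart $1/\widetilde c_i$, $\widetilde\lambda_{ij}$, $\widetilde{\mathbb M}^{(r)}$, $\tfrac1T\int_0^T V'(Y_t)Y_t^j\dd t$. First I would use, for each entry, the telescoping identity $a_1\cdots a_n - \widetilde a_1\cdots\widetilde a_n = \sum_{m=1}^n \widetilde a_1\cdots\widetilde a_{m-1}(a_m-\widetilde a_m) a_{m+1}\cdots a_n$, so that the error of the entry becomes a finite sum of terms, each carrying exactly one ``difference'' factor times a product of at most $n-1$ ``bounded'' factors (a mixture of deterministic exact constants and empirical quantities).

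To each such term I would apply H\"older's inequality, isolating the difference factor, which is then controlled by the results already available: moment differences $\abs{\mathbb M^{(r)}-\widetilde{\mathbb M}^{(r)}}$ by \eqref{eq:estimate_moments}, coefficient differences $\abs{\lambda_{ij}-\widetilde\lambda_{ij}}$ by \cref{lem:estimate_lambda}, and inverse-normalization differences $\abs{1/c_i-1/\widetilde c_i}$ by \cref{lem:estimate_inverse_ck}, each giving the rate $1/\sqrt T + 1/\sqrt N$ for a suitable exponent $q$. For (ii) one additionally needs $\bigl(\E\bigl[\abs{\tfrac1T\int_0^T V'(Y_t)Y_t^j\dd t - \E^\rho[V'(X)X^j]}^q\bigr]\bigr)^{1/q}\le C(1/\sqrt T+1/\sqrt N)$; since $V'$ and $V''$ are polynomially bounded by \cref{as:L2_rho} and $X_t^{(n)},X_t$ have bounded moments of all orders by \cref{as:unique}, the observable $V'(\cdot)(\cdot)^j$ is $\mathcal C^1$ with polynomial growth, so the argument of \cite[Lemma 4.3]{PaZ24} applies verbatim with $y^r$ replaced by $V'(y)y^j$. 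The remaining bounded factors are controlled in high $L^p$: exact quantities are deterministic constants depending only on $K$; empirical moments $\widetilde{\mathbb M}^{(r)}$, and hence the empirical coefficients $\widetilde\lambda_{ij}$ (finite polynomials in the empirical moments), have bounded moments of all orders; and the $1/\widetilde c_i$ are bounded in $L^r$ by \cref{as:inverse_ck}.

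The main obstacle is the bookkeeping of the H\"older exponents, and it is precisely here that the hypothesis $r>4$ of \cref{as:inverse_ck} is used. For $\widetilde B^{(K)}_{T,N}$ each summand $\tfrac1{\widetilde c_i\widetilde c_k}\widetilde\lambda_{i\ell}\widetilde\lambda_{kj}\widetilde{\mathbb M}^{(m+\ell)}\widetilde{\mathbb M}^{(j-m)}$ carries two inverse-normalization factors, each only $L^r$-integrable; after telescoping, one must place the difference factor at an exponent that is simultaneously strictly below $2$ (so \eqref{eq:estimate_moments}/\cref{lem:estimate_lambda} apply) or below $2r/(r+2)$ (so \cref{lem:estimate_inverse_ck} applies, when the difference is an inverse-normalization difference), while leaving both $1/\widetilde c$ factors within $L^r$ and the remaining $\widetilde\lambda,\widetilde{\mathbb M}$ factors within some $L^p$. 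A short computation shows this is feasible precisely when $r>4$: taking the inverse-normalization factors at exponent $r$ and the difference factor at an exponent slightly above $r/(r-2)$ (respectively slightly above $r/(r-1)$), one uses $r/(r-2)<2$ and $r/(r-1)<2r/(r+2)$, both equivalent to $r>4$. For $\widetilde\gamma^{(K)}_{T,N}$ and $\widetilde\alpha^{(K)}_{T,N}$ only a single inverse-normalization factor appears per summand, so the exponent count is not tight. Summing the finitely many terms per entry and then over the entries ($\OO(K)$ for the vectors, $\OO(K^2)$ for the matrix) yields (i)--(iii) with a constant $C(K)$ independent of $T$ and $N$, following the scheme of \cite[Lemma 4.5]{PaZ24}.
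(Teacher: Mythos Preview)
Your proposal is correct and follows essentially the same route as the paper: an entrywise telescoping decomposition into terms carrying exactly one difference factor, followed by H\"older's inequality to isolate that difference and control it via \eqref{eq:estimate_moments}, \cref{lem:estimate_lambda}, or \cref{lem:estimate_inverse_ck}, with the remaining empirical factors bounded in high $L^p$ and the $1/\widetilde c$ factors in $L^r$ via \cref{as:inverse_ck}. Your identification of the exponent bookkeeping in (iii) as the place where $r>4$ is genuinely needed (two inverse normalizations forcing the difference exponent above $r/(r-2)$, which must stay below $2$) matches the paper's explicit table of H\"older exponents, and your treatment of (ii) via the argument of \cite[Lemma~4.3]{PaZ24} applied to the $\mathcal C^1$, polynomially growing observable $x\mapsto V'(x)x^j$ is exactly what the paper does, including the mean value theorem step using the polynomial bound on $V''$.
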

\begin{proof}
Let us start from (i). By definition of $\widetilde \gamma^{(K)}_{T,N}$ and $\gamma^{(K)}$, we have
\begin{equation}
\begin{aligned}
\E \left[ \norm{\widetilde \gamma^{(K)}_{T,N} - \gamma^{(K)}} \right] &= \E \left[ \sqrt{ \sum_{i=0}^K \left( \frac1{\widetilde c_i} \sum_{j=1}^i j \widetilde\lambda_{ij} \widetilde{\mathbb M}^{(j-1)} - \frac1{c_i} \sum_{j=1}^i j \lambda_{ij} \mathbb M^{(j-1)} \right)^2 } \right] \\
&\le \sum_{i=0}^K \E \left[ \abs{ \frac1{\widetilde c_i} \sum_{j=1}^i j \widetilde\lambda_{ij} \widetilde{\mathbb M}^{(j-1)} - \frac1{c_i} \sum_{j=1}^i j \lambda_{ij} \mathbb M^{(j-1)} } \right],
\end{aligned}
\end{equation}
and using the triangle inequality we get
\begin{equation} \label{eq:decomposition_gamma}
\begin{aligned}
\E \left[ \norm{\widetilde \gamma^{(K)}_{T,N} - \gamma^{(K)}} \right] &\le \sum_{i=0}^K \sum_{j=1}^i j \left( \E \left[ \frac1{\widetilde c_i} \abs{\widetilde\lambda_{ij} - \lambda_{ij}} \abs{\widetilde{\mathbb M}^{(j-1)}} \right] + \abs{\lambda_{ij}} \E \left[ \frac1{\widetilde c_i} \abs{\widetilde{\mathbb M}^{(j-1)} - \mathbb M^{(j-1)}} \right] \right) \\
&\quad + \sum_{i=0}^K \E \left[ \abs{\frac1{\widetilde c_i} - \frac1{c_i}} \right] \left( \sum_{j=1}^i j \abs{\lambda_{ij} \mathbb M^{(j-1)}} \right).
\end{aligned}
\end{equation}
Then, using Hölder's inequality we obtain for $r$ given by \cref{as:inverse_ck}
\begin{equation} \label{eq:bound_gamma_1}
\E \left[ \frac1{\widetilde c_i} \abs{\widetilde{\mathbb M}^{(j-1)} - \mathbb M^{(j-1)}} \right] \le \left( \E \left[ \frac1{\widetilde c_i^r} \right] \right)^{\frac1{r}} \left( \E \left[ \abs{\widetilde{\mathbb M}^{(j-1)} - \mathbb M^{(j-1)}}^{\frac{r}{r-1}} \right] \right)^{\frac{r-1}{r}},
\end{equation}
and
\begin{equation} \label{eq:bound_gamma_2}
\begin{aligned}
\E \left[ \frac1{\widetilde c_i} \abs{\widetilde\lambda_{ij} - \lambda_{ij}} \abs{\widetilde{\mathbb M}^{(j-1)}} \right] &\le \left( \E \left[ \frac1{\widetilde c_i^r} \right] \right)^{\frac1{r}} \left( \E \left[ \abs{\widetilde\lambda_{ij} - \lambda_{ij}}^{\frac{3r-2}{2(r-1)}} \right] \right)^{\frac{2(r-1)}{3r-2}} \\
&\qquad \times \left( \E \left[ \abs{\widetilde{\mathbb M}^{(j-1)}}^{\frac{r(3r-2)}{(r-2)(r-1)}} \right] \right)^{\frac{(r-2)(r-1)}{r(3r-2)}},
\end{aligned}
\end{equation}
where we notice that
\begin{equation}
\frac1{r} + \frac{2(r-1)}{3r-2} + \frac{(r-2)(r-1)}{r(3r-2)} = 1, \qquad \frac{3r-2}{2(r-1)} \in [1,2), \qquad \frac{r(3r-2)}{(r-2)(r-1)} \ge 1.
\end{equation}
Therefore, using bounds \eqref{eq:bound_gamma_1} and \eqref{eq:bound_gamma_2} in equation \eqref{eq:decomposition_gamma}, due to the boundedness of the moments, equation \eqref{eq:estimate_moments}, \cref{as:inverse_ck}, and \cref{lem:estimate_lambda,lem:estimate_inverse_ck}, we deduce (i). We proceed similarly for (ii) and we get
\begin{equation} \label{eq:decomposition_alpha}
\begin{aligned}
\E \left[ \norm{\widetilde \alpha^{(K)}_{T,N} - \alpha^{(K)}} \right] &\le \sum_{i=0}^K \sum_{j=0}^i \E \left[ \frac1{\widetilde c_i} \abs{\widetilde\lambda_{ij} - \lambda_{ij}} \abs{\frac1T \int_0^T V'(Y_t) Y_t^j \dd t} \right] \\
&\quad + \sum_{i=0}^K \sum_{j=0}^i \abs{\lambda_{ij}} \E \left[ \frac1{\widetilde c_i} \abs{\frac1T \int_0^T V'(Y_t) Y_t^j \dd t - \E^\rho \left[ V'(X) X^j \right]} \right] \\
&\quad + \sum_{i=0}^K \E \left[ \abs{\frac1{\widetilde c_i} - \frac1{c_i}} \right] \left( \sum_{j=0}^i \abs{\lambda_{ij} \E^\rho \left[ V'(X) X^j \right]} \right).
\end{aligned}
\end{equation}
The next steps are still analogous to the ones for (i), and therefore (ii) follows if we show that for all $p \ge 1$ and for all $q \in [1,2)$ there exists a constant $\widetilde C > 0$, independent of $T$ and $N$, such that
\begin{align}
\left( \E \left[ \abs{\frac1T \int_0^T V'(Y_t) Y_t^j \dd t}^p \right] \right)^{1/p} &\le \widetilde C, \label{eq:result_1} \\
E \defeq \left( \E \left[ \abs{\frac1T \int_0^T V'(Y_t) Y_t^j \dd t - \E^\rho \left[ V'(X) X^j \right]}^q \right] \right)^{1/q} &\le \widetilde C \left( \frac1{\sqrt T} + \frac1{\sqrt N} \right). \label{eq:result_2}
\end{align}
First, by the Hölder's inequality, we have
\begin{equation}
\E \left[ \abs{\frac1T \int_0^T V'(Y_t) Y_t^j \dd t}^p \right] \le \frac1T \int_0^T \E \left[ \abs{V'(Y_t) Y_t^j}^p \right] \dd t,
\end{equation}
and the bound \eqref{eq:result_1} follows since $V'$ is polynomially bounded and due to the boundedness of the moments. For equation \eqref{eq:result_2}, we proceed as in the proof of \cite[Lemma 4.3]{PaZ24} and we obtain
\begin{equation}
\begin{aligned}
E &\le \left( \E \left[ \abs{\frac1T \int_0^T V'(X_t) X_t^j \dd t - \E^\rho \left[ V'(X) X^j \right]}^2 \right] \right)^{1/2} \\
&\quad + \left( \frac1T \int_0^T \E \left[ \abs{V'(Y_t)- V'(X_t)}^q \abs{Y_t^j}^q \right] \dd t \right)^{1/q} \\
&\quad + \left( \frac1T \int_0^T \E \left[ \abs{V'(X_t)}^q \abs{Y_t^j - X_t^j}^q \right] \dd t \right)^{1/q}, \\
&\eqdef E_1 + E_2 + E_3.
\end{aligned}
\end{equation}
Then, for $E_1$ and $E_3$, still following the proof of \cite[Lemma 4.3]{PaZ24}, we apply the mean ergodic theorem in \cite[Section 4]{MST10}, Hölder's and Jensen's inequality, the boundedness of the moments, and the uniform propagation of chaos property to get $E_1 \le \widetilde C/\sqrt T$ and $E_3 \le \widetilde C / \sqrt N$. Regarding $E_2$, using the mean value theorem, we have
\begin{equation}
V'(Y_t) - V'(X_t) = V''(Z_t) (Y_t - X_t),
\end{equation} 
where $Z_t$ takes values between $Y_t$ and $X_t$, and applying again Hölder's and Jensen's inequality, the boundedness of the moments, and the uniform propagation of chaos property we deduce $E_2 \le \widetilde C / \sqrt N$, which yields (ii). Let us now consider (iii). Since the spectral norm is bounded by the Frobenius norm, we have
\begin{equation}
\begin{aligned}
\E \left[ \norm{\widetilde B^{(K)}_{T,N} - B^{(K)}} \right] &\le \sum_{i=0}^K \sum_{k=0}^K \E \left[ \left| \frac1{\widetilde c_i \widetilde c_k} \sum_{\ell=0}^i \sum_{j=0}^k \sum_{m=0}^j (-1)^{j-m} \binom{j}{m} \widetilde\lambda_{i\ell} \widetilde\lambda_{kj} \widetilde{\mathbb M}^{(m+\ell)} \widetilde{\mathbb M}^{(j-m)} \right. \right. \\
&\hspace{2cm} \left. \left. - \frac1{c_i c_k} \sum_{\ell=0}^i \sum_{j=0}^k \sum_{m=0}^j (-1)^{j-m} \binom{j}{m} \lambda_{i\ell} \lambda_{kj} \mathbb M^{(m+\ell)} \mathbb M^{(j-m)} \right| \right] \\
&\le \sum_{i=0}^K \sum_{k=0}^K \sum_{\ell=0}^i \sum_{j=0}^k \sum_{m=0}^j \binom{j}{m} E_{ikljm},
\end{aligned}
\end{equation}
where
\begin{equation}
E_{ikljm} = \E \left[ \abs{\frac1{\widetilde c_i \widetilde c_k} \widetilde\lambda_{i\ell} \widetilde\lambda_{kj} \widetilde{\mathbb M}^{(m+\ell)} \widetilde{\mathbb M}^{(j-m)} - \frac1{c_i c_k} \lambda_{i\ell} \lambda_{kj} \mathbb M^{(m+\ell)} \mathbb M^{(j-m)}} \right].
\end{equation}
Then, applying multiple triangle inequalities to isolate the differences of all the terms appearing in the right-hand side, we obtain
\begin{equation}
\begin{aligned}
E_{ikljm} &\le \E \left[ \frac1{\widetilde c_i \widetilde c_k} \abs{\widetilde\lambda_{i\ell}} \abs{\widetilde\lambda_{kj}} \abs{\widetilde{\mathbb M}^{(m+\ell)}} \abs{\widetilde{\mathbb M}^{(j-m)} - \mathbb M^{(j-m)}} \right] \\
&\quad + \abs{\mathbb M^{(j-m)}} \E \left[ \frac1{\widetilde c_i \widetilde c_k} \abs{\widetilde\lambda_{i\ell}} \abs{\widetilde\lambda_{kj}} \abs{\widetilde{\mathbb M}^{(m+\ell)} - \mathbb M^{(m+\ell)}} \right] \\
&\quad + \abs{\mathbb M^{(m+\ell)}} \abs{\mathbb M^{(j-m)}} \E \left[ \frac1{\widetilde c_i \widetilde c_k} \abs{\widetilde\lambda_{i\ell}} \abs{\widetilde\lambda_{kj} - \lambda_{kj}} \right] \\
&\quad + \abs{\lambda_{kj}} \abs{\mathbb M^{(m+\ell)}} \abs{\mathbb M^{(j-m)}} \E \left[ \frac1{\widetilde c_i \widetilde c_k} \abs{\widetilde\lambda_{i\ell} - \lambda_{i\ell}} \right] \\
&\quad + \abs{\lambda_{i\ell}} \abs{\lambda_{kj}} \abs{\mathbb M^{(m+\ell)}} \abs{\mathbb M^{(j-m)}} \E \left[ \frac1{\widetilde c_i} \abs{\frac1{\widetilde c_k} - \frac1{c_k}} \right] \\
&\quad + \frac1{c_k} \abs{\lambda_{i\ell}} \abs{\lambda_{kj}} \abs{\mathbb M^{(m+\ell)}} \abs{\mathbb M^{(j-m)}} \E \left[ \abs{\frac1{\widetilde c_i} - \frac1{c_i}} \right].
\end{aligned}
\end{equation}
Finally, we estimate all the rows in the right-hand side applying Hölder's inequality to the expectations with exponents 
\begin{center}
\begin{tabular}{llllll}
$p_1 = r$, & $p_2 = r$, & $p_3 = \frac{3r(3r-4)}{(r-2)(r-4)}$, & $p_4 = \frac{3r(3r-4)}{(r-2)(r-4)}$, & $p_5 = \frac{3r(3r-4)}{(r-2)(r-4)}$, & $p_6 = \frac{3r-4}{2r-4}$, \\
$p_1 = r$, & $p_2 = r$, & $p_3 = \frac{2r(3r-4)}{(r-2)(r-4)}$, & $p_4 = \frac{2r(3r-4)}{(r-2)(r-4)}$, & $p_5 = \frac{3r-4}{2r-4}$, & \\
$p_1 = r$, & $p_2 = r$, & $p_3 = \frac{r(3r-4)}{(r-2)(r-4)}$, & $p_4 = \frac{3r-4}{2r-4}$, & & \\
$p_1 = r$, & $p_2 = r$, & $p_3 = \frac{r}{r-2}$, & & & \\
$p_1 = r$, & $p_2 = \frac{r}{r-1}$, & & & & \\
$p_1 = 1$, & & & & &
\end{tabular}
\end{center}
respectively, and then using the boundedness of the moments, equation \eqref{eq:estimate_moments}, \cref{as:inverse_ck}, and \cref{lem:estimate_lambda,lem:estimate_inverse_ck}, we obtain (iii), which concludes the proof.
\end{proof}

The second approximation we make by passing from the exact system \eqref{eq:system_truncated} to the final system \eqref{eq:linear_system_approx} is to remove the term $e^{(K)}$. In the next result, we justify this step by showing that $e^{(K)}$ vanishes for large values of $K$.

\begin{lemma} \label{lem:estimate_eK}
Let $e^{(K)}$ be defined in equation \eqref{eq:system_terms}. Under \cref{as:unique,as:L2_rho}, it holds
\begin{equation}
\lim_{K \to \infty} \norm{e^{(K)}} = 0.
\end{equation}
\end{lemma}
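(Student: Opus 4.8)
The plan is to identify each remainder component $\epl_i^{(K)}$ with a Fourier coefficient of the convolved function $(W'-S_K)*\rho$, where $S_K \defeq \sum_{k=0}^K\beta_k\psi_k$ is the $K$-th partial sum of the Fourier series of $W'$, and then to control $\norm{e^{(K)}}$ by the $L^2(\rho)$-norm of that convolution through Bessel's inequality. Two structural facts make this work. First, $B_{ik}=0$ whenever $i>k$: indeed $\psi_k*\rho$ is a polynomial of degree $k$ (integrating the polynomial $\psi_k(x-y)$ against $\rho(y)\dd y$, which has finite moments of all orders, only modifies the coefficients), and $\psi_i$ is orthogonal in $L^2(\rho)$ to every polynomial of degree $<i$. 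Second, from \eqref{eq:equation_infinity} — equivalently, by differentiating the self-consistency relation \eqref{eq:invariant_measure} and testing against $\psi_i$ — we have
\begin{equation*}
\sum_{k=0}^\infty B_{ik}\beta_k = \sigma\gamma_i - \alpha_i = \E^\rho\big[\psi_i(X)\,(W'*\rho)(X)\big],
\end{equation*}
that is, $(\sigma\gamma - \alpha)$ is the sequence of Fourier coefficients of $W'*\rho$, which belongs to $L^2(\rho)$ by \cref{as:L2_rho}. Since the finite sum $\sum_{k=0}^K B_{ik}\beta_k = \E^\rho[\psi_i(X)(S_K*\rho)(X)]$ is unambiguous and $S_K*\rho$ is a polynomial, hence in $L^2(\rho)$, subtracting gives
\begin{equation*}
\epl_i^{(K)} = \sum_{k=K+1}^\infty B_{ik}\beta_k = \E^\rho\big[\psi_i(X)\,\big((W'-S_K)*\rho\big)(X)\big], \qquad (W'-S_K)*\rho = (W'*\rho) - (S_K*\rho) \in L^2(\rho).
\end{equation*}

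With this in hand, orthonormality of $\{\psi_i\}_{i=0}^\infty$ in $L^2(\rho)$ and Bessel's inequality give
\begin{equation*}
\norm{e^{(K)}}^2 = \sum_{i=0}^K \abs{\E^\rho\big[\psi_i(X)\big((W'-S_K)*\rho\big)(X)\big]}^2 \le \norm{(W'-S_K)*\rho}_{L^2(\rho)}^2,
\end{equation*}
so the claim reduces to showing $\norm{(W'-S_K)*\rho}_{L^2(\rho)}\to 0$. By Parseval, $\norm{W'-S_K}_{L^2(\rho)}^2 = \sum_{k>K}\beta_k^2 \to 0$ since $W'\in L^2(\rho)$. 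It remains to push this convergence through the convolution with $\rho$. Writing $R_K \defeq W'-S_K$, Jensen's inequality gives $\abs{(R_K*\rho)(x)}^2 \le \int_\R R_K(x-y)^2\rho(y)\dd y$, hence by Fubini
\begin{equation*}
\norm{R_K*\rho}_{L^2(\rho)}^2 \le \int_\R R_K(z)^2\,(\rho*\check\rho)(z)\dd z, \qquad \check\rho(y)\defeq\rho(-y),
\end{equation*}
and one concludes by dominated convergence: since $R_K\to 0$ in $L^2(\rho)$, along any subsequence there is a further subsequence along which $R_K(z)\to 0$ for a.e.\ $z$, so (with an integrable majorant built from $W'$ and $\rho$) the right-hand side tends to $0$ along that sub-subsequence, and the subsequence criterion forces $\norm{R_K*\rho}_{L^2(\rho)}\to 0$ for the whole sequence. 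Equivalently, and perhaps more robustly, one shows directly that $W'*\rho\in L^2(\rho)$ forces the series $\sum_k\beta_k(\psi_k*\rho)$ to converge in $L^2(\rho)$ — its limit being $W'*\rho$, by matching Fourier coefficients as above — so that its tail $\norm{\sum_{k>K}\beta_k(\psi_k*\rho)}_{L^2(\rho)} = \norm{R_K*\rho}_{L^2(\rho)}$ vanishes.

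I expect the last step to be the real obstacle. Convolution with $\rho$ is not a bounded operator on $L^2(\rho)$ in general — when $\rho$ decays rapidly (e.g.\ in the Gaussian / Ornstein--Uhlenbeck case) one has $\norm{\psi_k*\rho}_{L^2(\rho)}\to\infty$ — so the convergence $(W'-S_K)*\rho\to 0$ cannot be deduced from $W'-S_K\to 0$ in $L^2(\rho)$ alone; it is precisely here that the hypothesis $W'*\rho\in L^2(\rho)$ enters essentially, by pitting the decay of the coefficients $\beta_k$ against the growth of $\norm{\psi_k*\rho}_{L^2(\rho)}$ (and, for the dominated-convergence route, by supplying the integrable majorant for $R_K^2\,(\rho*\check\rho)$). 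Everything preceding that step — the triangularity of $B$, the identification of $\epl_i^{(K)}$ as a Fourier coefficient, and the Bessel bound — is routine.
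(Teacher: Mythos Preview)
Your Bessel reduction is correct but, as you suspect, does not close. In fact it is not a reduction at all: using your own triangularity observation ($B_{ik}=0$ for $i>k$, so the Fourier coefficients of $S_K*\rho$ vanish beyond index $K$) together with Parseval for $R_K*\rho\in L^2(\rho)$, one obtains the exact identity
\[
\norm{R_K*\rho}_{L^2(\rho)}^2 \;=\; \norm{e^{(K)}}^2 \;+\; \sum_{i>K}\theta_i^2,\qquad \theta_i=\langle\psi_i,W'*\rho\rangle_{L^2(\rho)},
\]
so $\norm{R_K*\rho}_{L^2(\rho)}\to 0$ is \emph{equivalent} to the claim of the lemma, not a simplification of it. Neither argument you offer establishes it. Route (a) needs a $K$-uniform integrable majorant for $R_K(z)^2(\rho*\check\rho)(z)$, but $R_K=W'-S_K$ with $S_K$ a polynomial of degree $K$: along an a.e.-convergent subsequence you have no pointwise control on $S_K^2$, and the ``integrable majorant built from $W'$ and $\rho$'' is never exhibited. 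Route (b) is circular: deducing $L^2(\rho)$-convergence of $\sum_k\beta_k(\psi_k*\rho)$ from ``matching Fourier coefficients'' requires the partial sums to be Cauchy in $L^2(\rho)$, which is precisely $\norm{R_K*\rho}_{L^2(\rho)}\to 0$; coefficient matching alone gives at best weak convergence, and even that would need a uniform bound on $\norm{S_K*\rho}_{L^2(\rho)}$ that you do not have.

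For comparison, the paper never passes through the function $R_K*\rho$. It stays on the $\ell^2$ side: writing $\theta_i=\sum_{k}B_{ik}\beta_k$, it expands
\[
\norm{e^{(K)}}^2=\sum_{i=0}^K\theta_i^2+\sum_{i=0}^K\Big(\sum_{k=0}^KB_{ik}\beta_k\Big)^{\!2}-2\sum_{i=0}^K\theta_i\Big(\sum_{k=0}^KB_{ik}\beta_k\Big)
\]
and sends $K\to\infty$, invoking the Parseval identity $\norm{W'*\rho}_{L^2(\rho)}^2=\sum_i\theta_i^2$ for each of the three terms. This is a different organization of the same limit --- the diagonal passage in the second and third sums is, by the identity above, again equivalent to $S_K*\rho\to W'*\rho$ in $L^2(\rho)$ --- but it avoids the detour through convolution bounds and dominated convergence that leads your argument into trouble. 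Your diagnosis that convolution by $\rho$ is unbounded on $L^2(\rho)$ and that the hypothesis $W'*\rho\in L^2(\rho)$ must enter essentially is accurate; what is missing is a concrete mechanism converting that hypothesis into control of the tails.
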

\begin{proof}
Let us first notice that since $W'*\rho \in L^2(\rho)$, then we can write
\begin{equation}
(W'*\rho)(x) = \sum_{i=0}^\infty \theta_i \psi_i(x),
\end{equation}
where
\begin{equation} \label{eq:theta_i}
\theta_i = \int_\R \psi_i(x) (W'*\rho)(x) \rho(x) \dd x = \sum_{k=0}^\infty \beta_k \int_\R \psi_i(x) (\psi_k * \rho)(x) \rho(x) \dd x = \sum_{k=0}^\infty B_{ik} \beta_k.
\end{equation}
Moreover, by Parseval's theorem, we have
\begin{equation} \label{eq:Parseval}
\norm{W'*\rho}_{L^2(\rho)}^2 = \sum_{i=0}^\infty \theta_i^2 = \sum_{i=0}^\infty \left( \sum_{k=0}^\infty B_{ik} \beta_k \right)^2.
\end{equation}
Then, by definition of $e^{(K)}$ we get
\begin{equation}
\norm{e^{(K)}}^2 = \sum_{i=0}^K \left( \sum_{k=K+1}^\infty B_{ik} \beta_k \right)^2 = \sum_{i=0}^K \left( \sum_{k=0}^\infty B_{ik} \beta_k - \sum_{k=0}^K B_{ik} \beta_k \right)^2,
\end{equation}
which implies
\begin{equation}
\norm{e^{(K)}}^2 = \sum_{i=0}^K \left( \sum_{k=0}^\infty B_{ik} \beta_k \right)^2 + \sum_{i=0}^K \left( \sum_{k=0}^K B_{ik} \beta_k \right)^2 - 2 \sum_{i=0}^K \left( \sum_{k=0}^\infty B_{ik} \beta_k \right) \left( \sum_{k=0}^K B_{ik} \beta_k \right).
\end{equation}
Therefore, taking the limit as $K \to \infty$ and using equation \eqref{eq:Parseval} we obtain the desired result.
\end{proof}

We can now estimate the error of the estimator $\widehat \beta_{T,N}^{(K)}$ in equation \eqref{eq:estimator_beta} with respect to the true Fourier coefficients $\beta^{(K)}$. The proof of the next result is based on \cite[Section 3.1.2]{QSS98} and \cite[Theorem 4.6]{PaZ24}.

\begin{proposition} \label{pro:estimate_beta}
Let $\beta^{(K)}$ be defined in equation \eqref{eq:system_terms}, and let $\widehat \beta_{T,N}^{(K)}$ be the estimator given in equation \eqref{eq:estimator_beta}. Under \cref{as:unique,as:inverse_ck,as:L2_rho}, for all $K \ge 0$, if $\det(B^{(K)}) \neq 0$ where $B^{(K)}$ is defined in equations \eqref{eq:Bik}, \eqref{eq:system_truncated}, then there exists a constant $C = C(K) > 0$, independent of $T$ and $N$, such that
\begin{equation}
\E \left[ \norm{\widehat \beta_{T,N}^{(K)} - \beta^{(K)}} \right] \le C(K) \left( \frac1{\sqrt T} + \frac1{\sqrt N} \right) + \delta(K),
\end{equation}
where
\begin{equation}
\delta(K) = 2\norm{(B^{(K)})^{-1} e^{(K)}}.
\end{equation}
\end{proposition}
\begin{proof}
First, define the event $A_K$ as
\begin{equation}
A_K = \left\{ \norm{(B^{(K)})^{-1}} \norm{\widetilde B_{T,N}^{(K)} - B^{(K)}} < \frac12 \right\},
\end{equation}
and due to Markov's inequality and \cref{lem:estimate_terms_system}(iii) we have
\begin{equation} \label{eq:prob_compl}
\Pr(A_K^\compl) \le 2 \norm{(B^{(K)})^{-1}} \E \left[ \norm{\widetilde B_{T,N}^{(K)} - B^{(K)}} \right] \le C(K) \left( \frac1{\sqrt T} + \frac1{\sqrt N} \right).
\end{equation}
Therefore, using the law of total expectation and the fact that $\widehat \beta_{T,N}^{(K)}$ is the projection of $\widetilde \beta_{T,N}^{(K)}$ onto the convex and compact set $\mathcal B_K$ we obtain
\begin{equation} \label{eq:total_expectation}
\begin{aligned}
\E \left[ \norm{\widehat \beta_{T,N}^{(K)} - \beta^{(K)}} \right] &= \E \left[ \norm{\widehat \beta_{T,N}^{(K)} - \beta^{(K)}} | A_K \right] \Pr(A_K) + \E \left[ \norm{\widehat \beta_{T,N}^{(K)} - \beta^{(K)}} A_K^\compl \right] \Pr(A_K^\compl) \\
&\le \E \left[ \norm{\widetilde \beta_{T,N}^{(K)} - \beta^{(K)}} | A_K \right] + C(K) \left( \frac1{\sqrt T} + \frac1{\sqrt N} \right).
\end{aligned}
\end{equation}
Then, using equations \eqref{eq:system_truncated} and \eqref{eq:linear_system_approx} we can write
\begin{equation}
\begin{aligned}
\widetilde \beta_{T,N}^{(K)} - \beta^{(K)} &= \left( \mathcal I_{K+1} + (B^{(K)})^{-1}(\widetilde B_{T,N}^{(K)} - B^{(K)}) \right)^{-1} \\
&\qquad \times \left[ (B^{(K)})^{-1} \left( \sigma (\widetilde \gamma_{T,N}^{(K)} - \gamma^{(K)}) - (\widetilde \alpha_{T,N}^{(K)} - \alpha^{(K)}) \right) + (B^{(K)})^{-1} e^{(K)} \right] \\
&\quad - \left( \mathcal I_{K+1} + (B^{(K)})^{-1}(\widetilde B_{T,N}^{(K)} - B^{(K)}) \right)^{-1} (B^{(K)})^{-1} (\widetilde B_{T,N}^{(K)} - B^{(K)}) \beta^{(K)},
\end{aligned}
\end{equation}
which, following the proof of \cite[Theorem 3.1]{QSS98}, implies
\begin{equation}
\begin{aligned}
\E \left[ \norm{\widetilde \beta_{T,N}^{(K)} - \beta^{(K)}} | A_K \right] &\le 2 \norm{(B^{(K)})^{-1}} \E \left[ \sigma \norm{\widetilde\gamma_{T,N}^{(K)} - \gamma^{(K)}} + \norm{\widetilde\alpha_{T,N}^{(K)} - \alpha^{(K)}} | A_K \right] \\
&\hspace{-0.2cm} + 2 \norm{(B^{(K)})^{-1} e^{(K)}} + 2 \norm{(B^{(K)})^{-1}} \norm{\beta^{(K)}} \E \left[ \norm{\widetilde B_{T,N}^{(K)} - B^{(K)}} | A_K \right].
\end{aligned}
\end{equation}
Using the fact that $\E[Z|A_K] \le \E[Z]/\Pr(A_K)$ for a positive random variable $Z$, applying \cref{lem:estimate_terms_system}, and due to equation \eqref{eq:prob_compl}, we get for $T$ and $N$ sufficiently large
\begin{equation}
\E \left[ \norm{\widetilde \beta_{T,N}^{(K)} - \beta^{(K)}} | A_K \right] \le C(K) \left( \frac1{\sqrt T} + \frac1{\sqrt N} \right) + 2 \norm{(B^{(K)})^{-1} e^{(K)}},
\end{equation}
which, together with equation \eqref{eq:total_expectation}, gives the desired result.
\end{proof}

Finally, in the next result we consider the estimator $(\widehat W')_{T,N}^{(K)}$ in equation \eqref{eq:estimator_W}, and, employing \cref{pro:estimate_beta}, we analyze the approximation error with respect to the true interaction kernel $W'$.

\begin{theorem} \label{thm:estimate_W}
Let $(\widehat W')_{T,N}^{(K)}$ be the interaction kernel estimator defined in equation \eqref{eq:estimator_W}. Under \cref{as:unique,as:inverse_ck,as:L2_rho}, for all $K \ge 0$, if $\det(B^{(K)}) \neq 0$ where $B^{(K)}$ is defined in equations \eqref{eq:Bik}, \eqref{eq:system_truncated}, then there exists a constant $C = C(K) > 0$, independent of $T$ and $N$, such that
\begin{equation}
\E \left[ \norm{(\widehat W')_{T,N}^{(K)} - W'}_{L^2(\rho)} \right] \le C(K) \left( \frac1{\sqrt T} + \frac1{\sqrt N} \right) + \delta(K) + \epsilon(K),
\end{equation}
where
\begin{equation}
\delta(K) = 2\norm{(B^{(K)})^{-1} e^{(K)}} \qquad \text{and} \qquad \epsilon(K) = \sqrt{\sum_{k=K+1}^\infty \beta_k^2}.
\end{equation}
\end{theorem}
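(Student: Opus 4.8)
\emph{Proof proposal.} The plan is to split the error into three contributions by a telescoping decomposition: the error coming from replacing the exact orthonormal basis $\{\psi_k\}$ by its empirical approximation $\{\widetilde\psi_k\}$; the error in the estimated Fourier coefficients $\widehat\beta_{T,N}^{(K)}$ relative to the true ones $\beta^{(K)}$; and the truncation error from cutting the Fourier series of $W'$ at order $K$. Concretely, one writes
\begin{equation*}
(\widehat W')_{T,N}^{(K)} - W' = \sum_{k=0}^K (\widehat\beta_{T,N}^{(K)})_k (\widetilde\psi_k - \psi_k) + \sum_{k=0}^K \big((\widehat\beta_{T,N}^{(K)})_k - \beta_k\big) \psi_k - \sum_{k=K+1}^\infty \beta_k \psi_k,
\end{equation*}
and applies the triangle inequality in $L^2(\rho)$, so that it suffices to estimate the three terms separately.

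For the last term, Parseval's identity together with the fact that $\{\psi_k\}_{k=0}^\infty$ is an orthonormal basis of $L^2(\rho)$ gives directly $\norm{\sum_{k=K+1}^\infty \beta_k\psi_k}_{L^2(\rho)} = \sqrt{\sum_{k=K+1}^\infty \beta_k^2} = \epsilon(K)$. For the middle term, orthonormality yields $\norm{\sum_{k=0}^K \big((\widehat\beta_{T,N}^{(K)})_k - \beta_k\big)\psi_k}_{L^2(\rho)} = \norm{\widehat\beta_{T,N}^{(K)} - \beta^{(K)}}$, so that taking expectations and invoking \cref{pro:estimate_beta} bounds this contribution by $C(K)(1/\sqrt T + 1/\sqrt N) + \delta(K)$.

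For the first term, I would bound $\norm{\sum_{k=0}^K(\widehat\beta_{T,N}^{(K)})_k(\widetilde\psi_k - \psi_k)}_{L^2(\rho)} \le \sum_{k=0}^K \abs{(\widehat\beta_{T,N}^{(K)})_k}\, \norm{\widetilde\psi_k - \psi_k}_{L^2(\rho)}$ via the triangle inequality. Since $\widehat\beta_{T,N}^{(K)}$ lies in the compact set $\mathcal B_K$, its entries are bounded by a constant depending only on $K$, hence the right-hand side is $\le C(K)\sum_{k=0}^K \norm{\widetilde\psi_k - \psi_k}_{L^2(\rho)}$; taking expectations and applying \cref{pro:estimate_psi} to each $k = 0,\dots,K$ produces a bound of order $C(K)(1/\sqrt T + 1/\sqrt N)$. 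Adding the three estimates and absorbing all $K$-dependent constants into a single $C(K)$ gives the claimed bound.

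I do not expect a genuine obstacle here: the statement is a clean synthesis of \cref{pro:estimate_psi}, \cref{pro:estimate_beta}, and Parseval's identity. The only point requiring a little care is the uniform (in $T$ and $N$) boundedness of the coefficient vector $\widehat\beta_{T,N}^{(K)}$, which is exactly what the compactness of $\mathcal B_K$ provides, and keeping track of the fact that all the constants are permitted to depend on $K$ (so that no uniformity in $K$ is being claimed, consistently with the appearance of the separate terms $\delta(K)$ and $\epsilon(K)$).
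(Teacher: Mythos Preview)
Your proposal is correct and follows essentially the same approach as the paper's own proof: the same three-term telescoping decomposition, the use of orthonormality/Parseval for the second and third terms, and the compactness of $\mathcal B_K$ together with \cref{pro:estimate_psi} for the first term, before invoking \cref{pro:estimate_beta}.
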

\begin{proof}
By definition of $(\widehat W')_{T,N}^{(K)}$ and considering the Fourier expansion of $W'$, we have
\begin{equation}
\norm{(\widehat W')_{T,N}^{(K)} - W'}_{L^2(\rho)} = \norm{\sum_{k=0}^K (\widehat \beta_{T,N}^{(K)})_k \widetilde\psi_k - \sum_{k=0}^\infty \beta_k \psi_k}_{L^2(\rho)},
\end{equation}
which, due to the triangle inequality, implies
\begin{equation}
\begin{aligned}
\norm{(\widehat W')_{T,N}^{(K)} - W'}_{L^2(\rho)} &\le \norm{\sum_{k=0}^K (\widehat \beta_{T,N}^{(K)})_k (\widetilde\psi_k - \psi_k)}_{L^2(\rho)} \\
&\quad + \norm{\sum_{k=0}^K ((\widehat \beta_{T,N}^{(K)})_k - (\beta^{(K)})_k \psi_k}_{L^2(\rho)} + \norm{\sum_{k=K+1}^\infty \beta_k \psi_k}_{L^2(\rho)}.
\end{aligned}
\end{equation}
Then, since $\widehat \beta_{T,N}^{(K)}$ belongs to the compact set $\mathcal B_K$ and $\{ \psi_k \}_{k=0}^\infty$ is orthonormal in $L^2(\rho)$, we have
\begin{equation}
\begin{aligned}
\E \left[ \norm{(\widehat W')_{T,N}^{(K)} - W'}_{L^2(\rho)} \right] &\le C(K) \sum_{k=0}^K \E \left[ \norm{\widetilde\psi_k - \psi_k}_{L^2(\rho)} \right] + \E \left[ \norm{\widehat \beta_{T,N}^{(K)} - \beta} \right] + \sqrt{\sum_{k=K+1}^\infty \beta_k^2},
\end{aligned}
\end{equation}
which, due to \cref{pro:estimate_psi,pro:estimate_beta}, gives the desired result.
\end{proof}

\begin{remark}
In order for the estimator to converge, the additional terms $\delta(K)$ and $\epsilon(K)$ in \cref{thm:estimate_W} must disappear as $K$ increases. First, notice that $\epsilon(K) \to 0$ as $K \to \infty$ because $W' \in L^2(\rho)$. In principle, we should be able to relate the decay of the generalized Fourier coefficients of the interaction kernel (or its derivative) with the regularity of $W$, using results from approximation theory such as Jackson’s inequality, as was done in~\cite{GHR04}. In our setting, we would need to study this problem in weighted Sobolev spaces on the whole real line, but the study of this interesting question is beyond the scope of this paper. Regarding $\delta(K)$, since, by \cref{lem:estimate_eK}, $\|e^{(K)}\|$ vanishes, it suffices to require that $\|(B^{(K)})^{-1}\|$ do not blow up faster than the rate at which $\|e^{(K)}\|$ converges to zero. The proof of this fact is nontrivial, as it depends on the unknown interaction kernel $W'$. However, we observed that this was indeed the case in all of the numerical experiments that we considered. In particular, this is straightforward to verify for all polynomial interactions, since $e^{(K)} = 0$ (and also $\epsilon(K) = 0$) for all $K \ge r$ with $r$ being the degree of the polynomial, which in turn implies $\delta(K) = 0$. Alternatively, notice that we can write 
\begin{equation}
(B^{(K)})^{-1} e^{(K)} = (B^{(K)})^{-1} \left( \theta^{(K)} - B^{(K)} \beta^{(K)} \right) = (B^{(K)})^{-1} \theta^{(K)} - \beta^{(K)},
\end{equation}
where $\theta^{(K)} \in \R^{K+1}$ is the vector of the Fourier coefficients of $W'*\rho$, whose components $\theta_i$, $i = 0, \dots, K$, are given in equation \eqref{eq:theta_i}. Therefore, requiring that $\delta(K)$ vanishes is equivalent to requiring that the solution $b^{(K)}$ of the linear system $B^{(K)} b^{(K)} = \theta^{(K)}$ converges to $\beta^{(K)}$ as $K \to \infty$, which is reasonable to assume since we already know that $\theta_i = \sum_{k=0}^\infty B_{ik} \beta_k$ still by equation \eqref{eq:theta_i}.
\end{remark}

\section{Numerical experiments} \label{sec:numerics}

In this section, we employ the methodology introduced above to infer interaction kernels in particle systems, and verify numerically the estimates predicted by the theory. We first consider the mean-field Ornstein--Uhlenbeck process, for which a basis of orthogonal polynomials with respect to the invariant measure can be computed analytically. Even though the Ornstein--Uhlenbeck process is a simple test case, it is still an interesting example because it allows us to assess the performance of our method, since both the invariant measure and the corresponding orthogonal polynomials are given in closed form. Then, we consider more complex interaction kernels, including examples where the assumptions of our theoretical analysis are not necessarily satisfied. Even in this case, numerical experiments demonstrate that our methodology can still be used to learn the interaction kernel from a single trajectory. We generate synthetic observations by numerically solving the interacting particle system \eqref{eq:interacting_particles} with deterministic initial conditions, $X_0^{(n)} = 0$ for all $n = 1, \dots, N$. The SDE system is discretized employing the Euler--Maruyama scheme with a time step $h = 0.01$. Then, we assume to observe only the first particle in the system to infer the interaction kernel, so that $Y_t = X_t^{(1)}$ for all $t \in [0,T]$. We notice that, while $N$ is used in the simulation to generate the trajectories, it is not needed for the estimation once the observations are available. In fact, our approach assumes access to a single observed trajectory, with no additional information about the system size.

\subsection{Mean-field Ornstein--Uhlenbeck process}

\begin{figure}[t]
\begin{center}
$T = 10\,000$ \\
\includegraphics{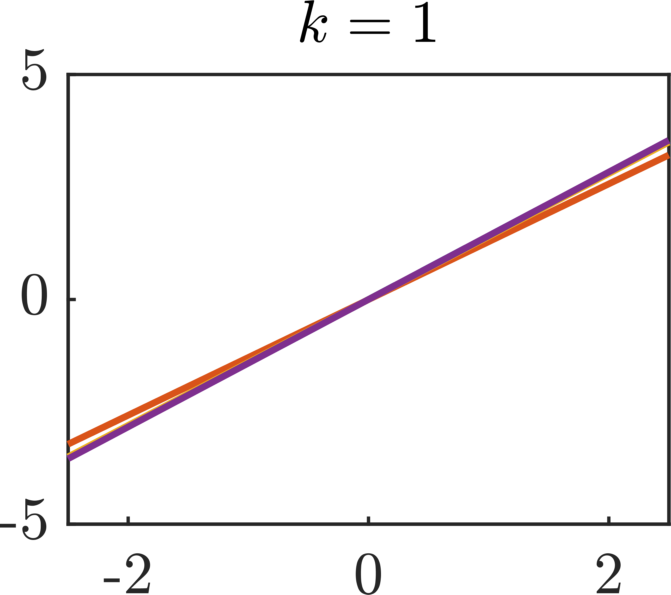}
\includegraphics{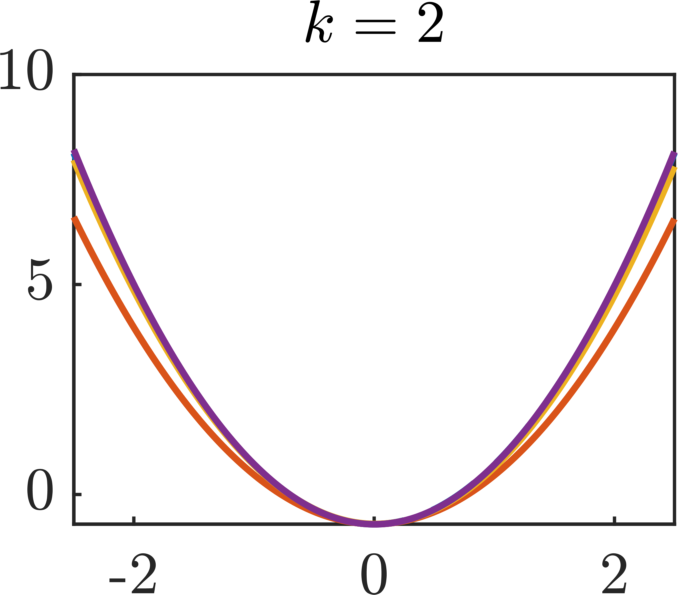}
\includegraphics{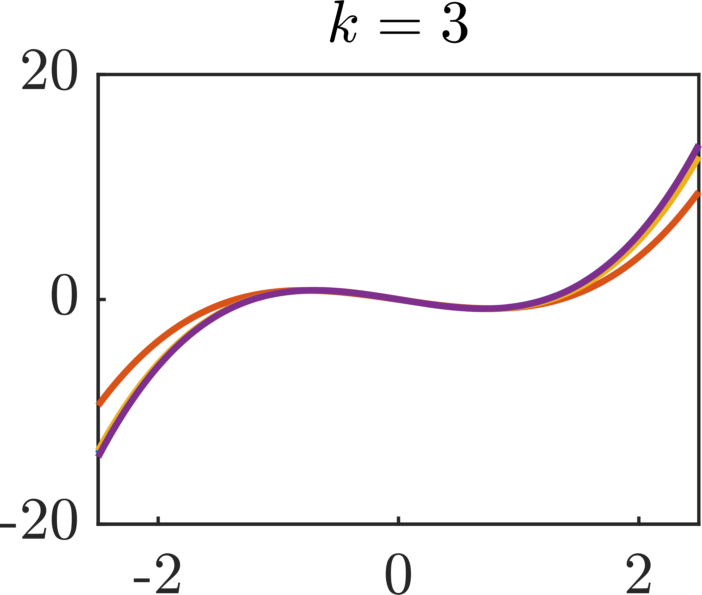}
\includegraphics{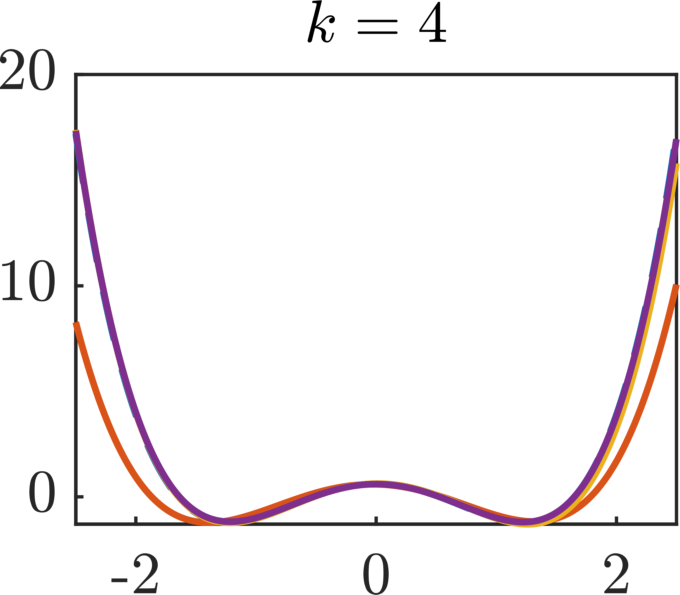} \\
\vspace{0.2cm}
\includegraphics{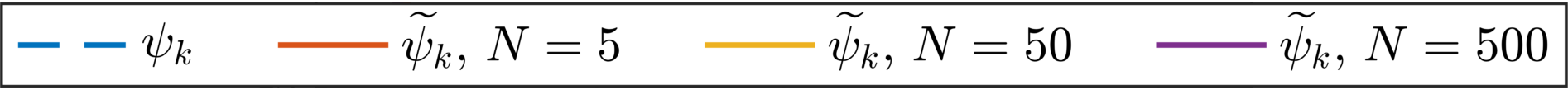} \\
\vspace{0.5cm}
$N = 500$ \\
\includegraphics{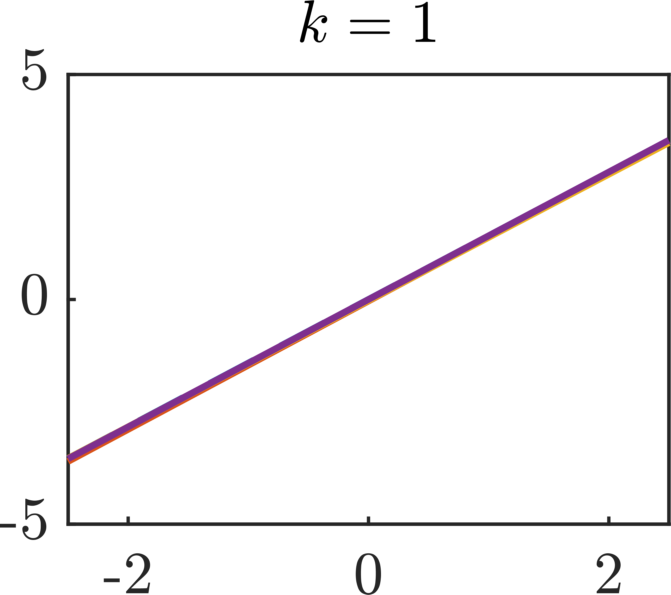}
\includegraphics{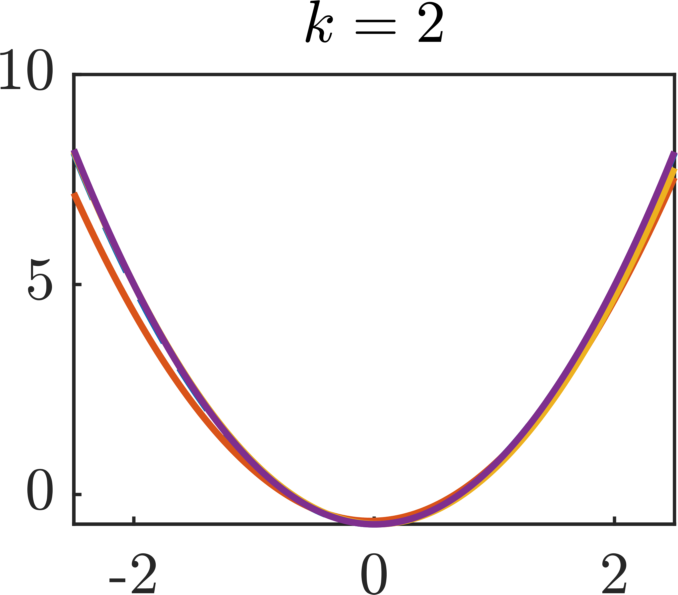}
\includegraphics{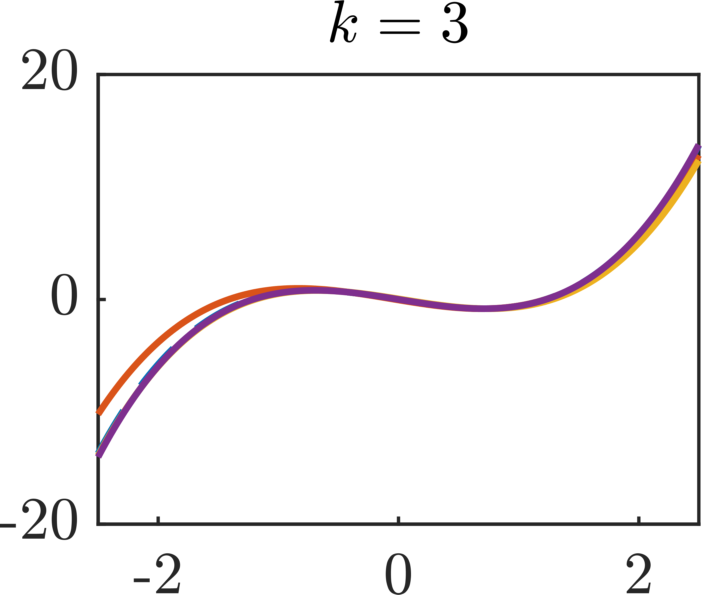}
\includegraphics{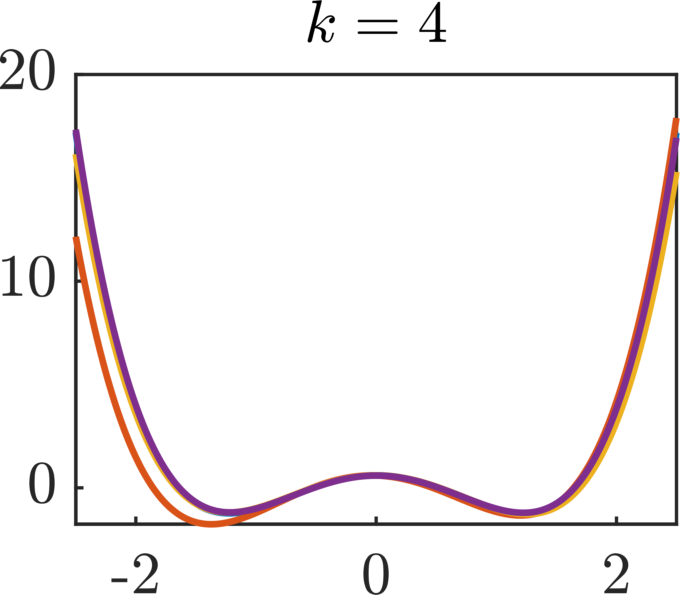} \\
\vspace{0.2cm}
\includegraphics{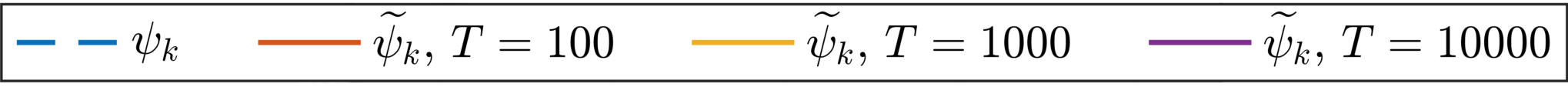}
\end{center}
\caption{Comparison between the first four (excluding the constant function) exact ($\psi_k$) and approximated ($\widetilde\psi_k$) orthogonal polynomials with respect to the invariant measure $\rho$ of the mean-field Ornstein--Uhlenbeck process. Top: we fix $T = 10\,000$ and vary $N = 5, 50, 500$. Bottom: we fix $N = 500$ and vary $T = 100, 1\,000, 10\,000$.}
\label{fig:orthogonal_polynomials}
\end{figure}

\begin{figure}[t]
\begin{center}
\includegraphics{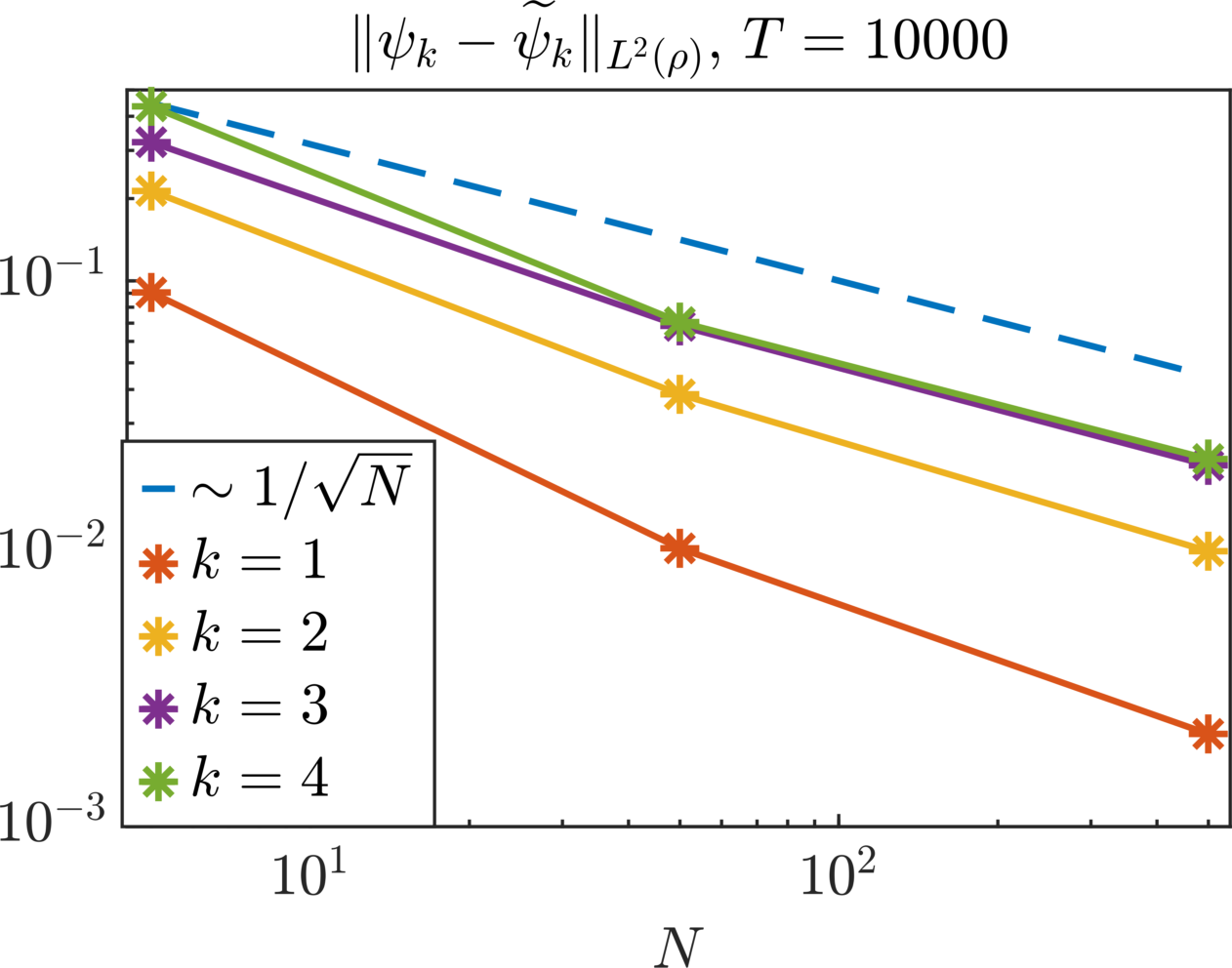} \hspace{0.5cm}
\includegraphics{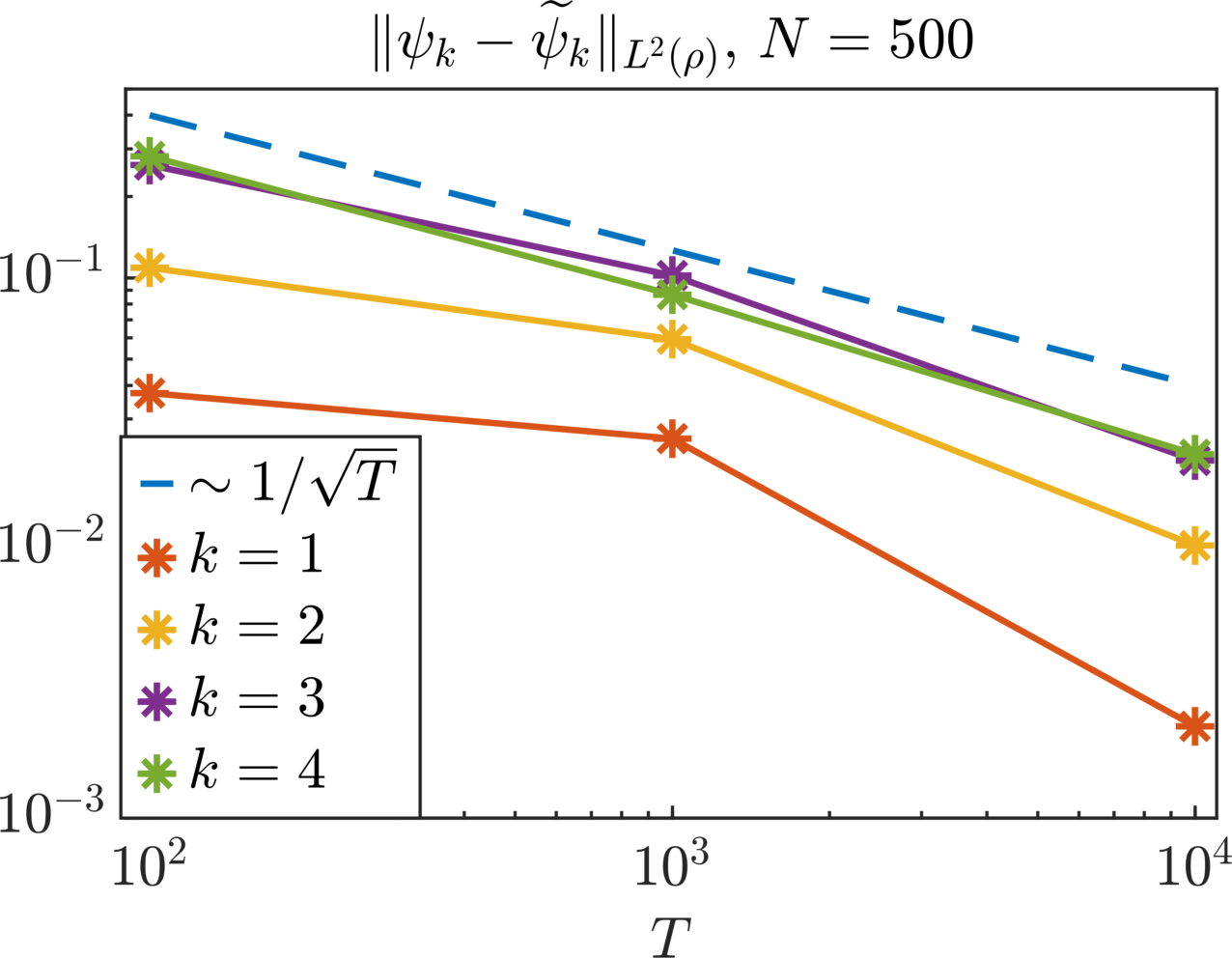}
\end{center}
\caption{Comparison between the theoretical and empirical rate of convergence of the first four (excluding the constant function) orthogonal polynomials with respect to the invariant measure $\rho$ of the mean-field Ornstein--Uhlenbeck process in $L^2(\rho)$, for both the number of particles $N$ (left) and the final time $T$ (right).}
\label{fig:orthogonal_polynomials_rate}
\end{figure}

\begin{figure}[t]
\begin{center}
\includegraphics{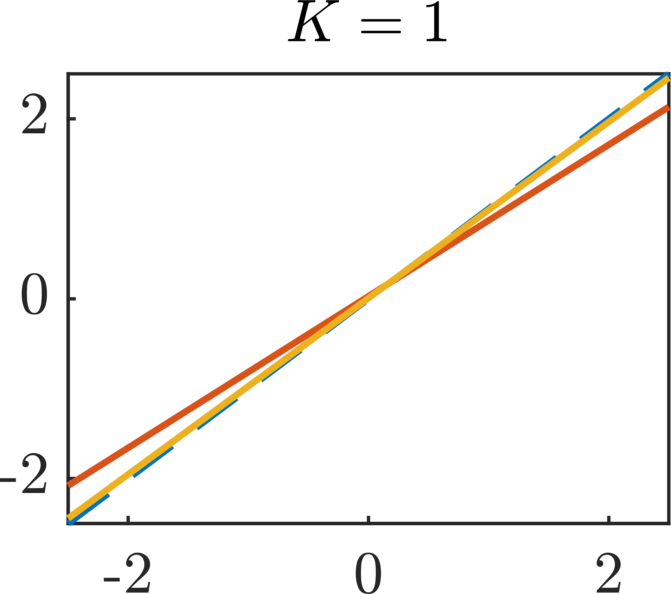}
\includegraphics{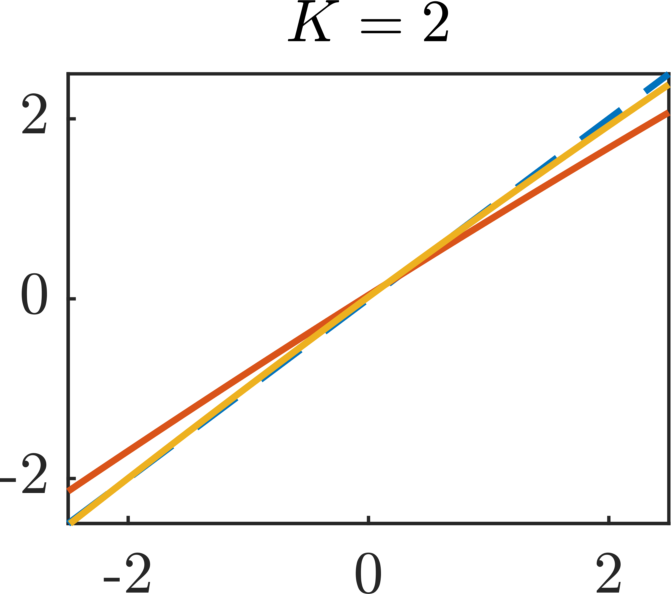}
\includegraphics{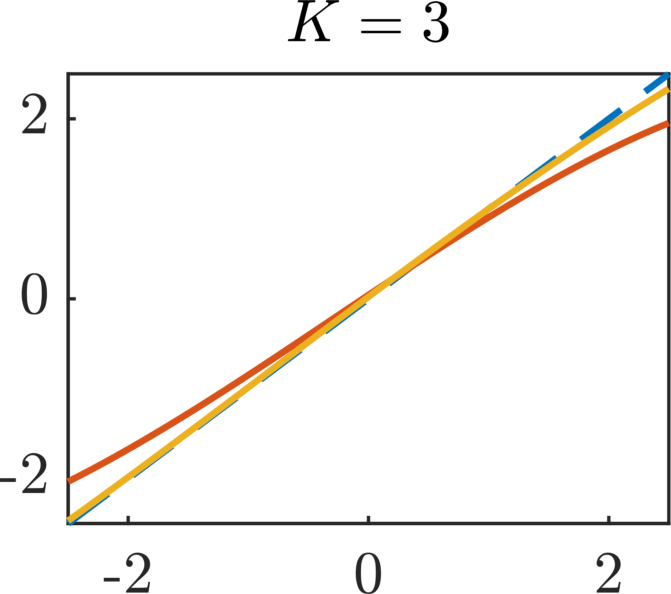}
\includegraphics{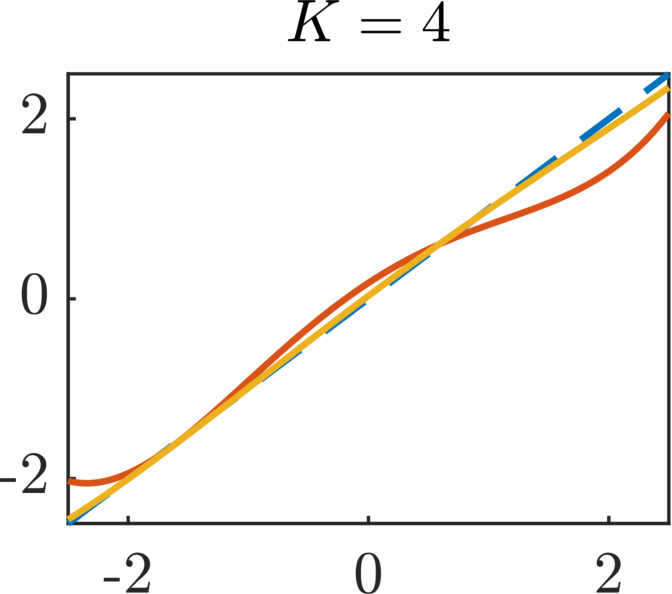} \\
\vspace{0.5cm}
\includegraphics{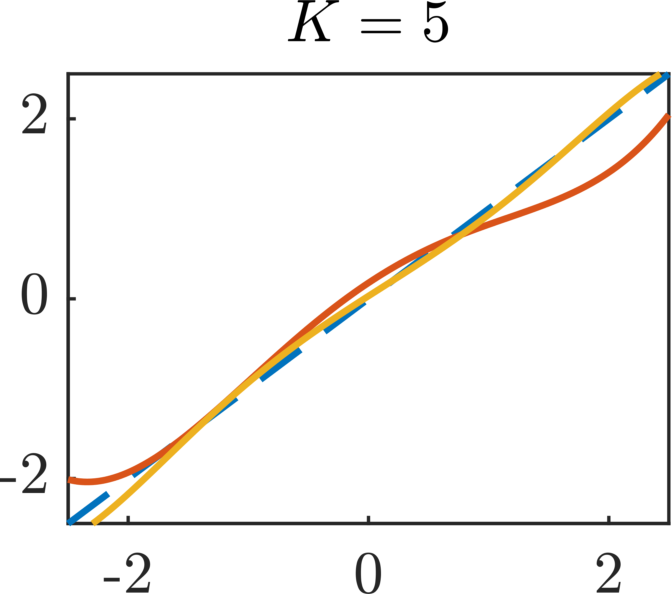}
\includegraphics{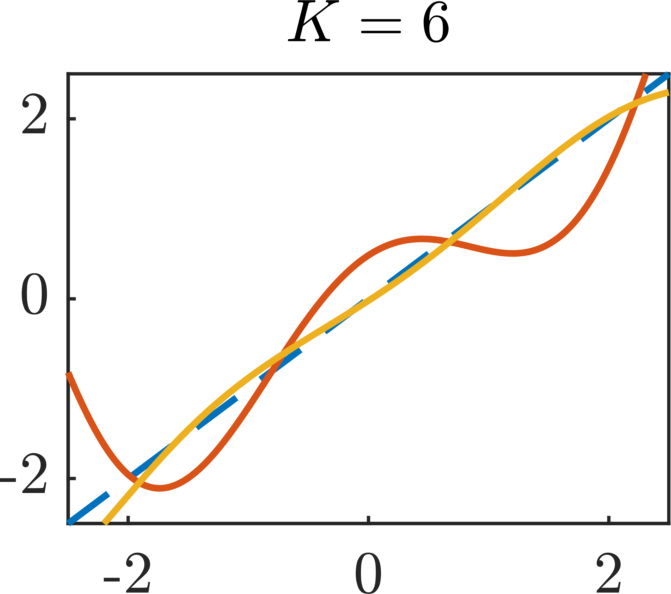}
\includegraphics{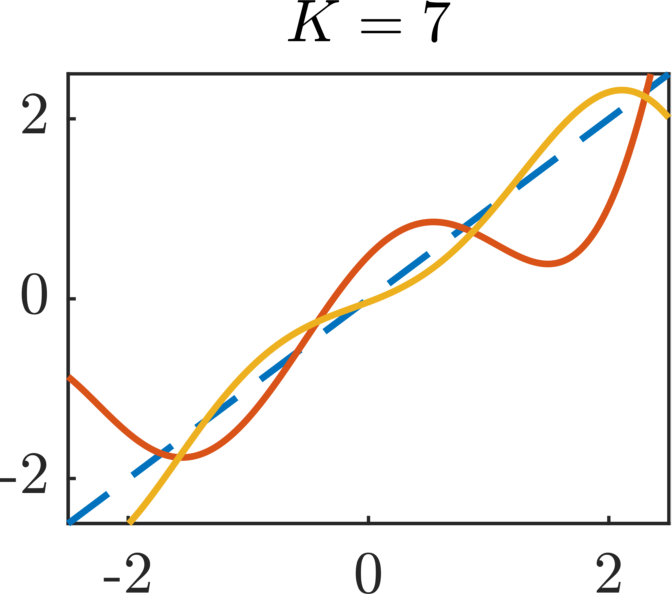}
\includegraphics{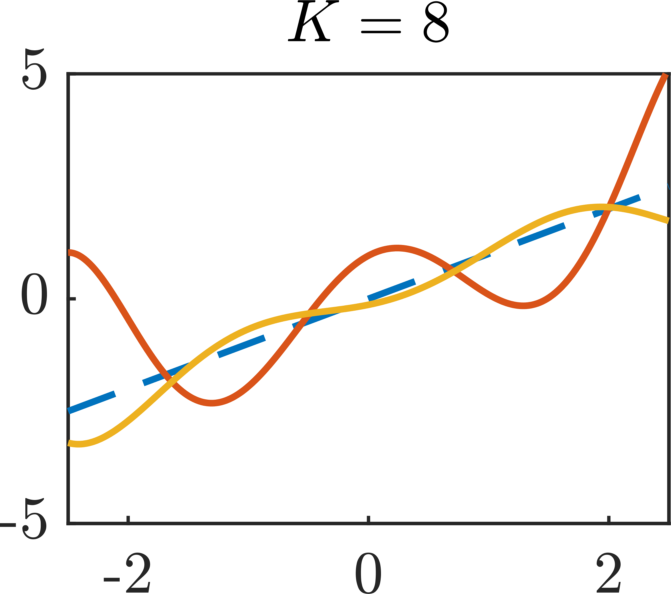} \\
\vspace{0.2cm}
\includegraphics{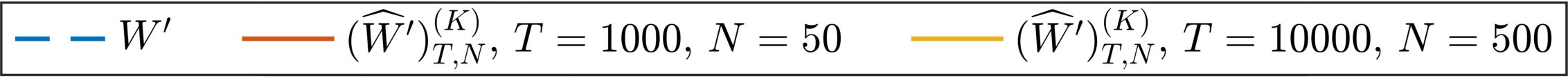}
\end{center}
\caption{Comparison between the true interaction kernel $W'$ and the estimators $(\widehat W')_{T,N}^{(K)}$ in two different cases ($T = 1\,000, N = 50$ and $T = 10\,000, N = 500$), for different numbers of Fourier coefficients $K = 1, \dots, 8$ for the Ornstein-Uhlenbeck interaction kernel.}
\label{fig:inference_OU}
\end{figure}

We consider the interacting particle system \eqref{eq:interacting_particles}, with quadratic confining and interaction potentials $V(x) = W(x) = x^2/2$, and set the diffusion coefficient $\sigma = 1$. Then, in the mean-field limit, the particle system converges to the McKean Ornstein--Uhlenbeck SDE
\begin{equation}
\d X_t = - X_t \dd t - (X_t - \E[X_t]) \dd t + \sqrt2 \d B_t,
\end{equation}
which has unique invariant measure $\mathcal N(0, 1/2)$ with density
\begin{equation}
\rho(x) = \frac1{\sqrt\pi} e^{-x^2},
\end{equation}
whose moments are given by
\begin{equation}
\mathbb M^{(k)} = \begin{cases}
0 & \text{if $k$ is odd}, \\
\left(\frac1{\sqrt2}\right)^k (k-1)!! & \text{if $k$ is even}.
\end{cases}
\end{equation}
Notice that, in this case, the orthogonal polynomials with respect to $\rho$ have the closed-form expression
\begin{equation} \label{eq:Hermite}
\psi_k(x) = \frac1{\sqrt{2^k k!}} H_k(x),
\end{equation}
for all $k \in \N$, where $H_k$ denotes the standard Hermite polynomial of degree $k$.

In \cref{fig:orthogonal_polynomials,fig:orthogonal_polynomials_rate} we compare the exact orthogonal polynomials of equation \eqref{eq:Hermite} with the approximated polynomials obtained using the single-particle trajectory observation, for different values of the number of particles $N$ in the system and the final observation time $T$. In particular, in \cref{fig:orthogonal_polynomials} we plot the first four polynomials (starting at $k=1$) varying $N = 5, 50, 500$ with $T = 10\,000$ fixed and then varying $T = 100, 1\,000, 10\,000$ with $N = 500$ fixed. We observe that, as expected, the approximation error improves when $T$ and $N$ are larger. Moreover, in \cref{fig:orthogonal_polynomials_rate} we compute the approximation error in the space $L^2(\rho)$ and verify the convergence rate provided by the theory. We remark that, even if the error is computed for a single observation and the theoretical rate holds in expectation, the two rates match. We also notice that the approximation error is greater for polynomials with higher degree $k$, and this is due to the constant $C(k)$ in \cref{pro:estimate_psi} which grows for larger values of $k$.

We then apply the proposed methodology to learn the interaction kernel $W'(x) = x$, and we consider two cases: ``few'' observations ($T = 1\,000$) in a small system ($N = 50$) and ``many'' observations ($T = 10\,000$) in a large system ($N = 500$). In \cref{fig:inference_OU} we compare the results for different Fourier series truncations $K = 1, \dots, 8$ used in the expansion of the interaction kernel. We note that, in this simple setting, two Fourier coefficients ($\beta_0$ and $\beta_1$) are enough to approximate $W'$ and $\delta(K) = \epsilon(K) = 0$ in the statement of \cref{thm:estimate_W}, since $\beta_k = 0$ for all $k \ge 2$. Therefore, we notice that increasing $K$ only worsens the results due to the constant $C(K)$, appearing in front of $1/\sqrt T$ and $1/\sqrt N$, which blows up for larger values of $K$. This is due to the ill-conditioning of both the linear system used to compute the estimator, specifically the fact that the condition number of the matrix $B^{(K)}$ increases with $K$, and the Gram--Schmidt algorithm applied to obtain the orthonormal basis. On the other hand, we observe that if the time of observation and the number of particles in the system are larger, then polynomials of higher degree still provide accurate approximations of the interaction kernel, and this is in agreement with the theoretical result in \cref{thm:estimate_W}. We emphasize that this numerical experiment shows the importance of the choice of $K$ in the method, in case we have limited observations and/or small interacting particle systems, since numerical instability becomes immediately apparent in such cases. Therefore, it would be interesting to determine the criteria for automatically adjusting $K$, and we will return to this problem in future work.

\subsection{Discrete-time, low-frequency observations}

\begin{figure}[t]
\begin{center}
\includegraphics[scale=0.98]{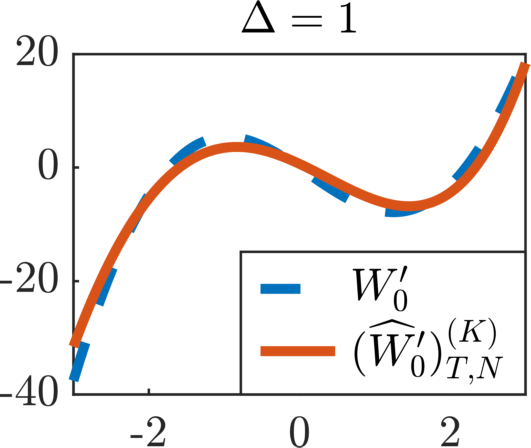}
\includegraphics[scale=0.98]{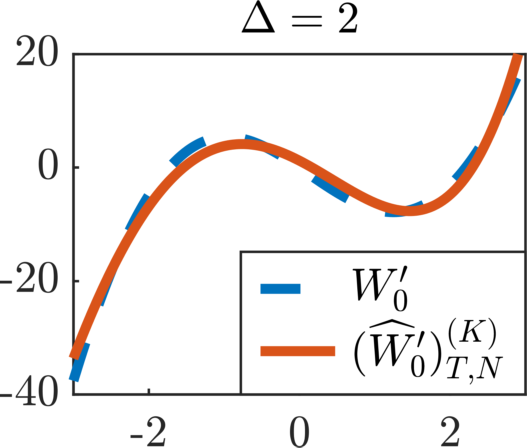}
\includegraphics[scale=0.98]{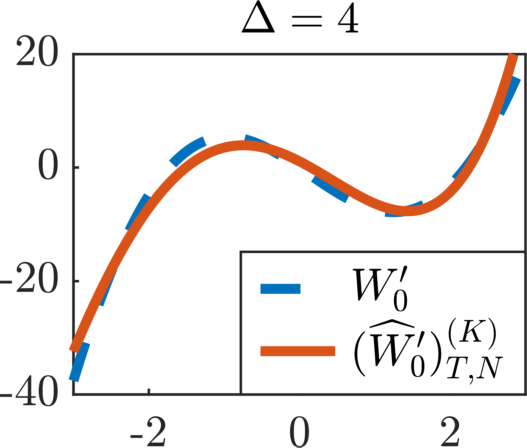}
\includegraphics[scale=0.98]{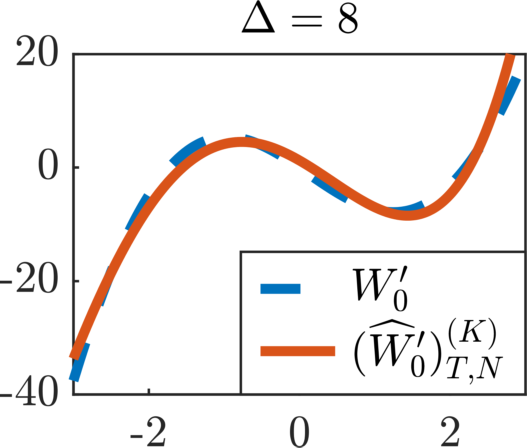}
\end{center}
\caption{Comparison between the true interaction kernel $W_0'$ from equation \eqref{eq:W0} and the estimator $(\widehat W_0')_{T,N}^{(K)}$, for the case of discrete-time observations with different sampling rates $\Delta = 1, 2, 4, 8$.}
\label{fig:discrete}
\end{figure}

As highlighted in \cref{rem:discrete}, the methodology developed in this work can also be applied when only discrete-time observations are available. In this section, we present a numerical test case to illustrate this setting. We consider a system with diffusion coefficient $\sigma = 1$, a quadratic confining potential $V(x) = x^2/2$, and a polynomial interaction potential with sinusoidal term given by
\begin{equation} \label{eq:W0}
W_0(x) = \frac{x^4}{4} - \frac{x^3}{3} + \frac{x^2}{2} + 10\cos(x).
\end{equation}
We set the final time to $T = 5000$, the number of particles to $N = 250$, and the number of Fourier coefficients to $K = 4$.

We assume that only discrete-time samples $\{ \widetilde Y_i \}_{i=0}^I$ from a single particle trajectory of the interacting particle system (as defined in equation \eqref{eq:observations}) are available, for different sampling intervals $\Delta = 1, 2, 4, 8$. The corresponding estimators $(\widehat W'_i)_{T,N}^{(K)}$ are shown in \cref{fig:discrete}, where they are compared against the true interaction kernel. We observe that the estimator accurately recovers the interaction kernel for all tested values of $\Delta$. In particular, the quality of the reconstruction does not appear to degrade significantly with larger sampling intervals, indicating robustness of the method to low-frequency data.

\subsection{Inference of interaction kernels}

\begin{figure}[t]
\begin{center}
\includegraphics{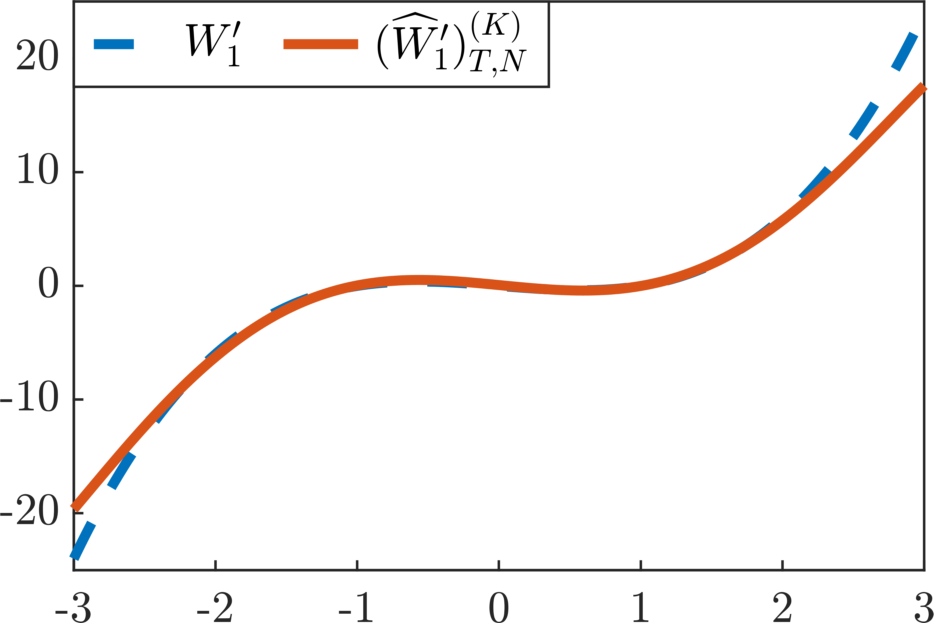} \hspace{0.5cm}
\includegraphics{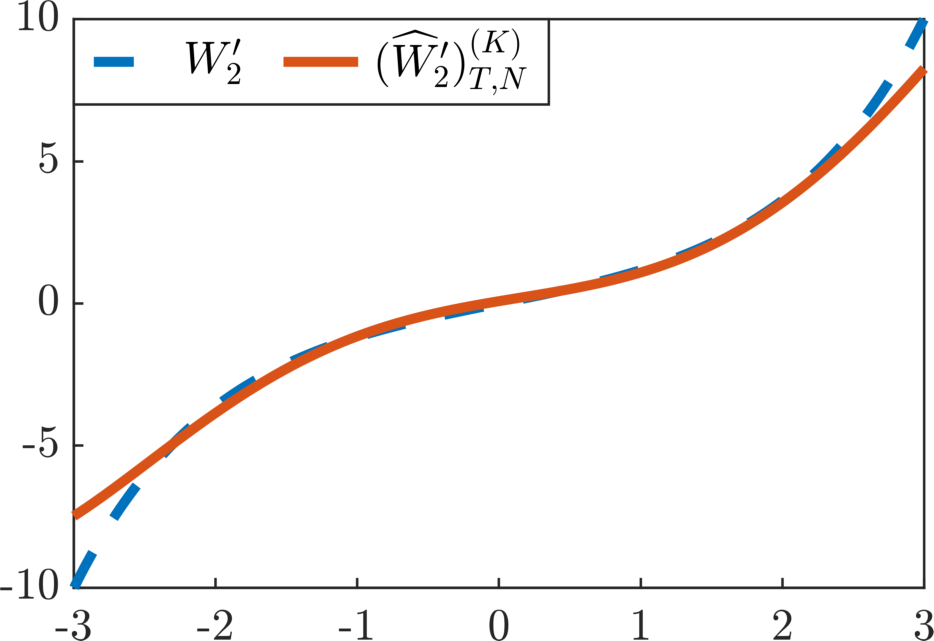} \\
\vspace{0.5cm}
\includegraphics{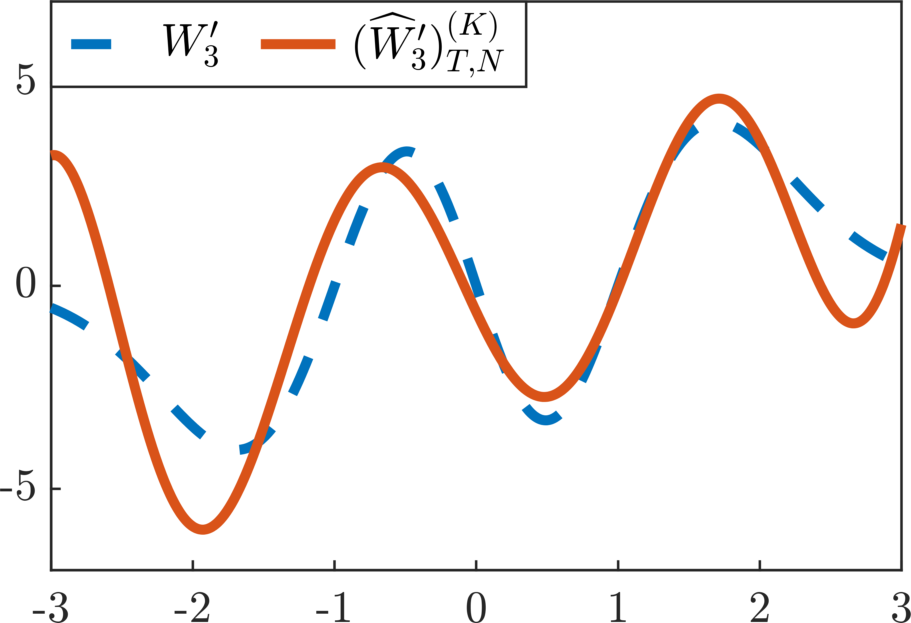} \hspace{0.5cm}
\includegraphics{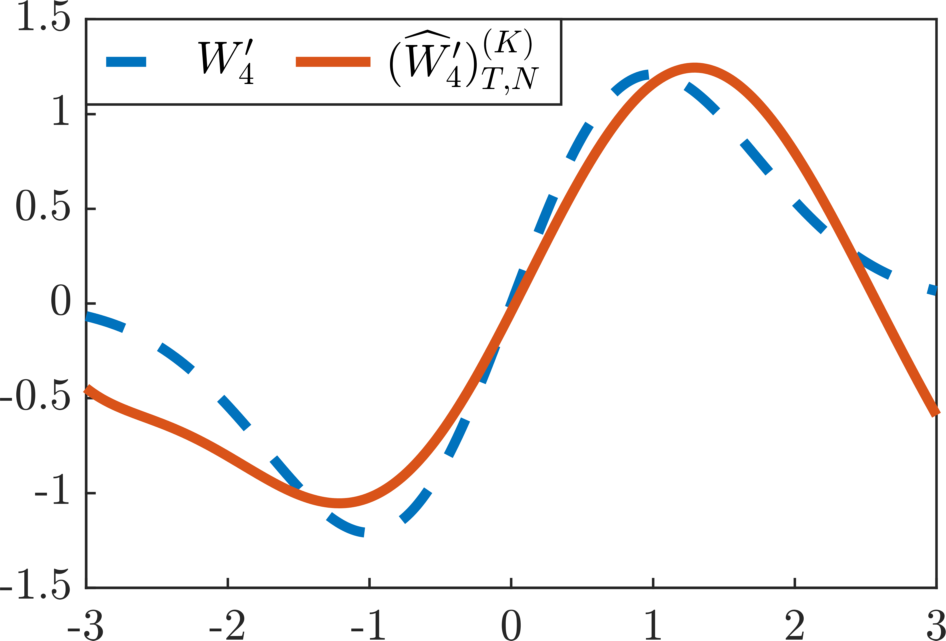} \\
\end{center}
\caption{Comparison between the true interaction kernels $W_i'$ and the estimators $(\widehat W_i')_{T,N}^{(K)}$ for the different test cases $i = 1, \dots, 4$ in equation \eqref{eq:potentials}.}
\label{fig:inference_functions}
\end{figure}

We now consider more complex interaction kernels. To ensure erogidicity, we fix the quadratic confining potential $V(x) = x^2/2$ and set the diffusion coefficient $\sigma = 1$. In the numerical experiments we will consider the following interaction potentials:
\begin{equation} \label{eq:potentials}
\begin{aligned}
W_1(x) &= \frac{x^4}4 - \frac{x^2}2, \\
W_2(x) &= \cosh(x), \\
W_3(x) &= D \left( 1 - e^{-a(x^2 - r^2)} \right)^2, \\
W_4(x) &= - \frac{A}{\sqrt{2\pi}} e^{-\frac{x^2}2}.
\end{aligned}
\end{equation}
All of these interaction potentials give rise to multiple stationary states at low temperatures. We note that $W_3$ is somewhat similar to the Morse potential, where we replaced the radius with $x^2$, and it is still the difference between an attractive and a repulsive potential, and $W_4$ represents a Gaussian attractive interaction. Moreover, we set $D = 5$, $a = 0.5$, $r = 1$ in $W_3$, and $A = 5$ in $W_4$. We consider $N = 250$ particles and set the final observation time at $T = 5000$ for the first two kernels, while we choose $N = 500$ and $T = 10\,000$ for $W_3$ and $W_4$. We then compute the estimators $(\widehat W'_i)_{T,N}^{(K)}$ for all $i = 1, \dots, 4$, and report the results in \cref{fig:inference_functions}. In the first test case, where the interaction kernel $W_1'$ is polynomial, the approximation obtained with the Fourier coefficients $K = 5$ is accurate, especially in the interval $[-2,2]$ where there are more observations. Similar considerations can be made for the second test case, where we still use $K = 5$ Fourier coefficients, even if the function $W_2'$ to be inferred is not polynomial. The third and fourth examples are more challenging, as they behave differently at infinity. In fact, both $W_3'$ and $W_4'$ vanish for $x \to \pm \infty$, and therefore they cannot be well approximated by any polynomial of finite degree. We observe that even if the estimators, which are computed with the $K = 11$ and $K = 9$ Fourier coefficients, respectively, are not as accurate as in the previous test cases, in particular for larger values of $x$, they are still able to match the overall shape of the interaction kernels. These numerical experiments show the potentiality of the approach presented in this work, which allows us to obtain a reasonable reconstruction of the interaction kernel using only a finite single trajectory from the interacting particle system, even for more complex scenarios that do not fit in the theoretical analysis.

\section{Conclusion} \label{sec:conclusion}

In this work, we proposed a methodology for learning the interaction kernel in interacting particle systems that relies only on the observation of a single particle. Our approach is based on a Fourier expansion of the interaction kernel, where the basis is made of orthogonal polynomials with respect to the invariant measure of the mean-field dynamics. We first approximated the moments of the invariant measure using the available observations, and then we employed the empirical moments to estimate the orthogonal polynomials. Finally, the Fourier coefficients are inferred by solving a linear system which still depends on the empirical moments and whose equations are derived from the stationary Fokker--Planck equation.

Our approach is easy to implement, since it only requires the approximation of the moments of the invariant measure and the solution of a low-dimensional linear system, and computationally cheap. Moreover, because of its versatility, it can infer complex interaction kernels. On the other hand, its main limitation, illustrated in the convergence analysis, is the dependence of the approximation error on the number $K$ of Fourier coefficients used in the expansion of the interaction kernel. In fact, larger values of $K$, which in principle should provide better estimates, can potentially lead to worse results if the observed trajectory or the number of particles in the system are not sufficiently large. The optimal choice of $K$ depends on multiple interacting factors, including the regularity of the function $W'$ to be approximated, the observation time $T$, the number of particles $N$, and numerical aspects such as the conditioning of the associated linear system and the stability of the Gram--Schmidt procedure. Therefore, developing and testing adaptive, data-driven procedures for automatically selecting $K$, such as the criterion inspired by the classical bias–variance trade-off in \cite[Section 5]{CGL24}, could significantly improve the robustness and practical applicability of our methodology.

The work presented in this paper can be extended in several other directions. First, we would like to improve the convergence result by quantifying the dependence on the number $K$ of Fourier coefficients, as this could help in finding techniques to reduce the approximation error. Moreover, we believe that it should be possible to define large classes of diffusion processes for which \cref{as:inverse_ck} holds and $\delta(K) \to 0$ in \cref{thm:estimate_W}, such that the asymptotic unbiasedness of the estimator is guaranteed a priori by a theoretical result. Another interesting development would be lifting the regularity hypotheses in \cref{as:L2_rho} on the confining and interaction potentials and considering the less regular functions to be estimated. In this paper, we consider the problem in one dimension in space. Similarly to the method of moments studied in~\cite{PaZ24}, we expect that the methodology developed in this paper could be extended to the multidimensional case. Conceptually, we believe that the framework of representing the interaction kernel in terms of an orthogonal basis with respect to the invariant measure of the mean-field dynamics remains applicable. Moreover, the construction of the linear system for computing the Fourier coefficients of the expansion could also be extended relatively straightforwardly. Nevertheless, moving to high-dimensional particle systems introduces, in addition to a more complicated theoretical analysis, several technical challenges that make this extension nontrivial. First, the construction of the orthogonal polynomial basis with respect to the invariant measure, which plays a central role in our estimator, becomes significantly more involved in higher dimensions. In particular, the initial basis of monomials would need to be replaced by tensor products of monomials, whose number grows exponentially with the dimension due to the curse of dimensionality. This leads to a more elaborate orthogonalization procedure, where the Gram--Schmidt algorithm might need to be replaced by more stable methods such as QR factorization with Householder reflections. Moreover, the increase in basis dimension implies larger linear systems to solve for the Fourier coefficients, which in turn affects both the conditioning of the system and the computational cost. The detailed analysis of the multidimensional problem will be presented elsewhere. To move toward more realistic applications, we will need consider (trajectory) observations that are contaminated by noise. This could be done, in principle, by combining our inference methodology with techniques from filtering, or by setting up the inference problem as a Bayesian inverse problem, as done in~\cite{Nic24}. We expect that incorporating observation noise will require additional steps in the inference methodology, increase the complexity of the statistical analysis, and potentially affect convergence rates. Finally, the numerical experiments suggest that \cref{as:unique} on the uniqueness of the invariant measure of the mean-field dynamics, e.g. our assumption that we have uniform propagation of chaos, is not necessary and that our semiparametric method is applicable even in the presence of multiple stationary states. This was also demonstrated numerically for the eigenfunction martingale estimator in~\cite{PaZ22}. We expect that the results from~\cite{MoR24}, combined with the linearization approach from~\cite{PaZ25} are sufficient to provide a rigorous justification of our methodology to the case where the mean-field dynamics exhibits non-uniqueness of stationary states. However, we expect that the performance of our estimator deteriorates as we approach the critical noise strength where the phase transition occurs. In fact, as shown in \cite{MoR24}, at the critical point the convergence to the invariant measure is only algebraic and the empirical time averages may converge more slowly. Finally, it would be interesting to apply our method to the kinetic Langevin / hypoelliptic setting~\cite{AmP24,IBG25,IgB25}. All these interesting problems will be studied in future work.

\subsection*{Acknowledgements}

We thank the anonymous reviewers whose comments and suggestions helped improve and clarify this manuscript. GAP is partially supported by an ERC-EPSRC Frontier Research Guarantee through Grant No. EP/X038645, ERC Advanced Grant No. 247031 and a Leverhulme Trust Senior Research Fellowship, SRF$\backslash$R1$\backslash$241055. AZ is supported by ``Centro di Ricerca Matematica Ennio De Giorgi'' and the ``Emma e Giovanni Sansone'' Foundation and is a member of INdAM-GNCS.

\bibliographystyle{siamnodash}
\bibliography{biblio}

\end{document}